\newtheorem{theorem}{Theorem}[section]
\newtheorem{proposition}[theorem]{Proposition}
\newtheorem{corollary}[theorem]{Corollary}
\newtheorem{definition}[theorem]{Definition}
\newtheorem{remark}[theorem]{Remark}
\newtheorem{example}[theorem]{Example}
\begin{document}

\begin{frontmatter}

\title{Local Antisymmetric Connectedness in Quasi-Uniform and Quasi-Modular Spaces}

\author{Philani Rodney Majozi}
\ead{Philani.Majozi@nwu.ac.za}
\address{Department of Mathematics, Pure and Applied Analytics,
North-West University, Mahikeng, South Africa}

\begin{abstract}
Directional notions in topology and analysis naturally lead to
nonsymmetric structures such as quasi-metrics, quasi-uniformities, and
modular spaces. In these settings, classical notions of connectedness
and completion based on symmetric uniformities are often inadequate.
In this paper, we study \emph{antisymmetric connectedness} and
\emph{local antisymmetric connectedness} within the setting of
quasi-uniform and quasi-modular pseudometric spaces. We associate to
each quasi-modular pseudometric family compatible forward and backward
modular topologies and quasi-uniformities, yielding a canonical
bitopological structure. Using this setting, we establish
characterization and stability results for local antisymmetric
connectedness, including invariance under subspaces, uniformly
continuous mappings, and bicompletion. We further relate these notions
to Smyth completeness and Yoneda-type completions and show how
precompactness combined with asymmetric completeness yields compactness
in the join topology. Applications to asymmetric normed and modular
spaces illustrate the theory.
\end{abstract}

\begin{keyword}
Asymmetric normed spaces \sep quasi-metric spaces \sep quasi-uniform spaces \sep
quasi-modular spaces \sep antisymmetric connectedness \sep local antisymmetric connectedness \sep
Smyth completion \sep Yoneda completion
\\[2pt]
MSC 2020: Primary 46A16 \sep 54E15 \sep Secondary 54D05 \sep 46B20
\end{keyword}

\end{frontmatter}
\section{Introduction}\label{sec:intro}

Many notions in analysis and topology are fundamentally directional:
costs need not be symmetric, approximation may be one-sided, and
convergence can depend on a prescribed orientation of nearness.
Such notions are naturally modelled by quasi-metrics and
quasi-uniform structures, where the absence of symmetry gives rise to
two conjugate notions of proximity.
This leads canonically to a pair of associated topologies and hence to a
bitopological environment
\cite{FletcherLindgren,Isbell1964,Kunzi2001,MarinRomaguera1998}.
The join of these topologies yields a symmetric envelope that is useful
both for comparison with the classical theory and for isolating features
that are genuinely directional from those that persist under
symmetrization \cite{Kelley,Willard}.

A central theme in nonsymmetric topology is that completion is no longer
governed by a single universal notion.
Beyond bicompletion via the supremum uniformity, one encounters
filter-pair based notions such as $D$-completeness and its variants, as
well as unilateral notions built from left/right $K$-Cauchy behavior and
Smyth-type completeness conditions
\cite{Doitchinov1991,FletcherLindgren,Kunzi2001}.
In parallel, the categorical point of view initiated by Lawvere and
developed in enriched settings provides a unifying perspective:
quasi-metric structures may be viewed as enriched categories, and
Yoneda-type constructions yield canonical completion procedures that
connect naturally with Smyth completions
\cite{Kelly1982,KunziSchellekens2002,Lawvere1973}.
Further motivation arises from computational and quantitative models,
where formal balls and related constructions encode completion and
compactness properties in a manner compatible with directed semantics
\cite{RomagueraTiradoValero2012,RomagueraValero2009,Rutten1998}; see also
\cite{FlaggSuenderhauf2002,Sunderhauf1998,Vickers1997}.

\medskip

Connectedness notions in nonsymmetric settings have so far received
considerably less attention than their metric and uniform counterparts.
In particular, recent work by Javanshir and Yıldız
\cite{JavanshirYildiz2023} investigates antisymmetric connectedness and
local antisymmetric connectedness in the restricted context of
asymmetrically normed real vector spaces, using path-based and
component-theoretic methods induced by norm-generated $T_0$-quasi-metrics.
While this approach provides valuable insight at the level of asymmetric
norms, it does not address connectedness from a quasi-uniform or
bitopological point of view, nor does it interact with completion theory,
bicompletion invariance, or modular structures.

\medskip

The purpose of this paper is to develop a substantially more general
setting in which \emph{directional connectedness} and
\emph{directional completion} notions can be formulated and studied in a
way that interfaces cleanly with the established quasi-uniform,
bitopological, and modular theories.
On the analytic side, we work with quasi-modular pseudometric families,
drawing on the classical modular space tradition and its modern
extensions to modular metric spaces
\cite{Chistyakov2015,MushaandjaOlela2025,Musielak1983}.
Building on our quasi-modular pseudometric approach \cite{Majozi2025},
we associate to a given modular family forward and backward modular
topologies, together with their induced quasi-uniformities and the
resulting bitopological structure.

Within this directional environment we introduce the notions of
\emph{antisymmetric connectedness} and \emph{local antisymmetric
connectedness} via bitopological separation principles rather than
path-based constructions.
This allows the theory to apply in settings where paths are unavailable
or inadequate, and makes the resulting notions stable under fundamental
operations such as uniformly continuous mappings and bicompletion.

Our contributions are primarily conceptual and structural.
First, we provide a systematic transition from quasi-modular pseudometric
families to compatible quasi-uniform and bitopological structures,
clarifying the role of conjugation and the join topology in the modular
context.
Second, we introduce directional connectedness notions adapted to this
setting and analyze their fundamental properties, including component
decompositions, local characterizations, and stability under subspaces
and images.
Third, we develop completion constructions suited to the quasi-modular
setting and aligned with the asymmetric completion paradigm; in
particular, we identify conditions under which precompactness combined
with Smyth-type completeness yields compactness, and we relate these
results to Yoneda-style completions and the formal-ball point of view
\cite{Kunzi2001,KunziSchellekens2002,Rutten1998}.
Finally, we indicate how these structural results interact with
asymmetric functional-analytic contexts, where nonsymmetric norms and
one-sided duality notions provide additional motivation
\cite{Cobzas2013,CobzasSurvey}.

The paper is organized as follows.
Section~\ref{sec:preliminaries} collects the quasi-metric,
quasi-uniform, and related preliminaries needed later.
In Section~\ref{sec:quasi-modular} we introduce quasi-modular
pseudometric families and develop their induced topologies and
quasi-uniformities.
Section~\ref{sec:completion} discusses completion mechanisms relevant to
the modular setting, with emphasis on Smyth-type completeness and the
Yoneda perspective.
Directional connectedness and its local counterpart are developed in
Sections~\ref{sec:sym-antisym-connectedness} and
\ref{sec:local-antisym-connectedness}.
Functional-analytic consequences and illustrative examples are
presented in Sections~\ref{sec:functional-analytic-consequences} and
\ref{sec:examples-applications}, followed by concluding remarks and open
directions in Section~\ref{sec:conclusion}.

\section{Preliminaries}\label{sec:preliminaries}

In this section we collect background material that will be used
throughout the paper. In line with the directional philosophy explained
in the Introduction (Section~\ref{sec:intro}), nonsymmetric proximity
notions naturally produce two conjugate topologies and hence a canonical
bitopological environment. Our basic references for quasi-uniform and
quasi-metric structures are \cite{FletcherLindgren,Isbell1964,Kunzi2001}.

\subsection{Quasi-Metric Spaces}\label{ssec:qm}

Quasi-metrics retain the triangle inequality but drop symmetry, thereby
encoding \emph{directional} proximity. This immediately produces two
topologies: the \emph{forward} topology $\tau_d$ generated by forward
balls
\[
B^+_\varepsilon(x):=\{y\in X: d(x,y)<\varepsilon\},
\]
and the \emph{backward} topology $\tau_{d^{-1}}$ generated by
\[
B^-_\varepsilon(x):=\{y\in X: d(y,x)<\varepsilon\},
\qquad\text{where } d^{-1}(x,y):=d(y,x).
\]
Consequently, the appropriate notion of Cauchy behavior becomes
direction-dependent. The left $K$-Cauchy condition captures
\emph{forward stabilization} of the net/sequence (tails become uniformly
close when read forward), while its associated limit notion is a
\emph{forward limit}. These unilateral notions underpin asymmetric
completion procedures and connect naturally with Smyth/Yoneda-type
constructions \cite{Kunzi2001,KunziSchellekens2002} and with Lawvere’s
enriched categorical point of view  \cite{Kelly1982,Lawvere1973}.

\begin{definition}\label{def:leftKCauchy}
Let $(X,d)$ be a quasi-pseudo-metric space (in particular, a quasi-metric
space if $(X,\tau_d)$ is $T_0$). A net $(x_\alpha)_{\alpha\in\Lambda}$ in
$X$ is called \emph{left $K$-Cauchy} if for every $\varepsilon>0$ there
exists $\alpha_\varepsilon\in\Lambda$ such that for all
$\alpha_2\ge \alpha_1\ge \alpha_\varepsilon$ we have
\[
d(x_{\alpha_1},x_{\alpha_2})<\varepsilon.
\]
A sequence $(x_n)_{n\in\mathbb{N}}$ is left $K$-Cauchy if the same
condition holds with indices $m\ge n\ge n_\varepsilon$:
\[
d(x_n,x_m)<\varepsilon\quad\text{for all }m\ge n\ge n_\varepsilon.
\]
Equivalently, if $\mathcal{U}_d$ denotes the quasi-uniformity induced by
$d$ (with basic entourages
$U_\varepsilon:=\{(x,y):d(x,y)<\varepsilon\}$), then $(x_\alpha)$ is left
$K$-Cauchy iff
\[
\forall U\in\mathcal{U}_d\ \exists \alpha_U\ \forall
\alpha_2\ge\alpha_1\ge\alpha_U:\ (x_{\alpha_1},x_{\alpha_2})\in U.
\]
\end{definition}

\begin{definition}
\label{def:forwardlimit}
Let $(X,d)$ be a quasi-pseudo-metric space and let
$(x_\alpha)_{\alpha\in\Lambda}$ be a left $K$-Cauchy net. A point $x\in
X$ is called a \emph{forward limit} of $(x_\alpha)$ if
\[
\lim_{\alpha} d(x_\alpha,x)=0,
\quad\text{i.e.}\quad
\forall \varepsilon>0\ \exists \alpha_\varepsilon\ \forall
\alpha\ge\alpha_\varepsilon:\ d(x_\alpha,x)<\varepsilon.
\]
Equivalently, $(x_\alpha)$ converges to $x$ in the backward topology
$\tau_{d^{-1}}$ (or, in quasi-uniform terms, it converges to $x$ with
respect to the topology $\tau(\mathcal{U}_d^{-1})$ induced by the
conjugate quasi-uniformity).
\end{definition}

\begin{definition}
\label{def:forwardcomplete}
A quasi-pseudo-metric space $(X,d)$ is called \emph{forward-complete} if
every left $K$-Cauchy net in $X$ admits a forward limit in $X$. It is
called \emph{sequentially forward-complete} if every left $K$-Cauchy
sequence in $X$ admits a forward limit in $X$.
\end{definition}

\begin{remark}\label{rem:qm-special}

\begin{enumerate}[label=\textup{(\roman*)},itemsep=2pt]
\item \textbf{Metric case:} If $d$ is symmetric, then left $K$-Cauchy
coincides with the usual Cauchy notion, and “forward limit” is just the
ordinary limit. Hence forward-completeness reduces to classical
completeness.
\end{enumerate}

\begin{enumerate}[label=\textup{(\roman*)},itemsep=2pt]
\item \textbf{Two-topology intuition:} In general $\tau_d$ and
$\tau_{d^{-1}}$ need not agree. A left $K$-Cauchy net is designed to
interact well with convergence in $\tau_{d^{-1}}$
(Definition~\ref{def:forwardlimit}), and unilateral completion theories
therefore differ genuinely from symmetric uniform completion
\cite{Kunzi2001}.

\item \textbf{Order-driven examples:} Many quasi-metrics arise from
preorders (e.g.\ $d(x,y)=0$ if $x\preceq y$ and $d(x,y)=1$ otherwise).
Then left $K$-Cauchy expresses eventual forward compatibility of the net
with the preorder, and a forward limit is a point that is eventually
\emph{reached} in the forward direction.

\item \textbf{Enriched perspective:} A quasi-metric space may be viewed as
a category enriched over $(\mathbb{R}_{\ge 0}\cup\{\infty\},+,0,\ge)$ in
the sense of Lawvere \cite{Lawvere1973}. Completion procedures of
Yoneda/Smyth type can then be described in terms of enriched
representability, providing a conceptual bridge to quantitative domain
theory \cite{Kelly1982,KunziSchellekens2002}.
\end{enumerate}
\end{remark}

\subsection{Quasi-Uniform Spaces}\label{ssec:qu}

We next recall the quasi-uniform setting, emphasizing conjugation and
the induced bitopology. Standard references are
\cite{FletcherLindgren,Isbell1964,Kunzi2001}. Completeness terminology is
taken mainly in the sense of Do\u{\i}tchinov \cite{Doitchinov1991}, and
we refer to \cite{KunziRomagueraSipacheva1998} for completion
constructions arising in paratopological groups. Stability properties
play a key role in several quasi-uniform constructions; see
\cite{KunziJunnila1993}. Dense subspaces in quasi-uniform settings are
treated in \cite{KunziLuethy1995}, and the notion of bicompletion  relevant
to left quasi-uniformities appear prominently in \cite{MarinRomaguera1998}.

\begin{definition}
\label{def:qu:basic}
Let $X$ be a set and let
$\Delta=\{(x,x)\mid x\in X\}\subseteq X\times X$. A \emph{quasi-uniformity}
on $X$ is a filter $\mathcal{U}$ on $X\times X$ such that
\begin{enumerate}[label=\textup{(\roman*)}]
\item $\Delta\subseteq U$ for every $U\in\mathcal{U}$,
\item for every $U\in\mathcal{U}$ there exists $V\in\mathcal{U}$ such
that $V\circ V\subseteq U$, where
$R\circ S=\{(x,z):\exists y,\ (x,y)\in R,\ (y,z)\in S\}$.
\end{enumerate}
The pair $(X,\mathcal{U})$ is called a \emph{quasi-uniform space}, and
the members of $\mathcal{U}$ are called \emph{entourages}. The
\emph{conjugate} quasi-uniformity is
\[
\mathcal{U}^{-1}:=\{U^{-1}\mid U\in\mathcal{U}\},
\qquad
U^{-1}:=\{(y,x):(x,y)\in U\}.
\]
The \emph{associated supremum uniformity} is the coarsest uniformity
finer than both $\mathcal{U}$ and $\mathcal{U}^{-1}$, denoted
\[
\mathcal{U}^{*}:=\mathcal{U}\vee\mathcal{U}^{-1}.
\]
For $U\in\mathcal{U}$ and $x\in X$ set $U(x):=\{y\in X:(x,y)\in U\}$.
The topology induced by $\mathcal{U}$ is
\[
\tau(\mathcal{U})
:=\{O\subseteq X:\forall x\in O\ \exists U\in\mathcal{U}\ \text{with }
U(x)\subseteq O\}.
\]
Analogously $\tau(\mathcal{U}^{-1})$ is induced by $\mathcal{U}^{-1}$,
and $(X,\tau(\mathcal{U}),\tau(\mathcal{U}^{-1}))$ is the induced
\emph{bitopological space}. We also write $\tau(\mathcal{U}^{*})$ for
the uniform topology of $\mathcal{U}^{*}$.
\end{definition}

\begin{definition}
\label{def:qu:cauchy}
Let $(X,\mathcal{U})$ be a quasi-uniform space.

\smallskip
\noindent\textbf{(1) Cauchy filter pairs and $D$-completeness:}
A filter $\mathcal{F}$ on $X$ is \emph{$D$-Cauchy} if there exists a
filter $\mathcal{G}$ on $X$ such that for each $U\in\mathcal{U}$ there
are $F\in\mathcal{F}$ and $G\in\mathcal{G}$ with $G\times F\subseteq U$.
In this situation $(\mathcal{F},\mathcal{G})$ is called a \emph{Cauchy
filter pair}. The space $(X,\mathcal{U})$ is \emph{$D$-complete} if
every $D$-Cauchy filter converges in the topology $\tau(\mathcal{U})$
\cite{Doitchinov1991}.

\smallskip
\noindent\textbf{(2) Bicompleteness:}
The space $(X,\mathcal{U})$ is called \emph{bicomplete} if the uniform
space $(X,\mathcal{U}^{*})$ is complete (in the usual uniform sense).
Equivalently, bicompleteness is completeness after passing to the
symmetrized uniformity $\mathcal{U}^{*}$.

\smallskip
\noindent\textbf{(3) Completeness via conets:}
A net $(x_\alpha)_{\alpha\in A}$ in $X$ is called \emph{Cauchy} if there
exists a net $(y_\beta)_{\beta\in B}$ in $X$ such that for each
$U\in\mathcal{U}$ there exist indices $\alpha_U\in A$, $\beta_U\in B$
with
\[
(y_\beta,x_\alpha)\in U
\quad\text{whenever }\alpha\ge \alpha_U,\ \beta\ge \beta_U.
\]
Any such $(y_\beta)$ is called a \emph{conet} of $(x_\alpha)$. The space
$(X,\mathcal{U})$ is \emph{complete} (in Doitchinov's sense) if every
Cauchy net converges in $\tau(\mathcal{U})$ \cite{Doitchinov1991}.
\end{definition}

\begin{definition}\label{def:qu:stability}
Let $(X,\mathcal{U})$ be a quasi-uniform space. A filter $\mathcal{F}$
on $X$ is called \emph{stable} if for each $U\in\mathcal{U}$ there exists
$A\in\mathcal{F}$ such that
\[
A\subseteq U(B)\quad\text{whenever }B\in\mathcal{F},
\qquad\text{where }U(B):=\bigcup_{x\in B}U(x).
\]
The space $(X,\mathcal{U})$ is called \emph{stable} if every $D$-Cauchy
filter on $(X,\mathcal{U})$ is stable. Equivalently, $(X,\mathcal{U})$
is stable if and only if $\mathcal{U}$ admits a subbase consisting of
stable entourages \cite{KunziJunnila1993}.
\end{definition}

\begin{proposition}
\label{prop:qu:complete-tb-compact}
Let $(X,\mathcal{V})$ be a \emph{uniform} space. If $(X,\mathcal{V})$ is
complete and totally bounded, then $X$ is compact.

\medskip
\noindent In the quasi-uniform setting, the corresponding symmetric
mechanism is: if $(X,\mathcal{U}^{*})$ is complete and totally bounded,
then $(X,\mathcal{U}^{*})$ is compact as a uniform space, hence
$\tau(\mathcal{U}^{*})$ is compact \cite{Isbell1964,FletcherLindgren,Kunzi2001}.
\end{proposition}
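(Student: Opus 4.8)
The statement is a classical fact (complete + totally bounded $\Rightarrow$ compact for uniform spaces), with the quasi-uniform addendum following immediately once one knows $\mathcal{U}^{*}$ is a genuine uniformity. The plan is to prove the uniform case by the standard net/ultrafilter argument and then note that the second assertion is just an instance of the first applied to $\mathcal{U}^{*}$.

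First I would recall that, for a uniform space, compactness is equivalent to every ultrafilter (equivalently, every universal net) being convergent; this is the route that makes total boundedness bite cleanly. So let $\mathcal{F}$ be an ultrafilter on $X$. The key step is to show $\mathcal{F}$ is a Cauchy filter with respect to $\mathcal{V}$: given a symmetric entourage $V\in\mathcal{V}$, pick $W\in\mathcal{V}$ symmetric with $W\circ W\subseteq V$; by total boundedness there are finitely many points $x_1,\dots,x_n$ with $X=\bigcup_i W(x_i)$. Since $\mathcal{F}$ is an ultrafilter and $X$ is covered by finitely many of these sets, some $W(x_i)\in\mathcal{F}$; and $W(x_i)\times W(x_i)\subseteq W^{-1}\circ W = W\circ W\subseteq V$, so $\mathcal{F}$ contains a $V$-small set. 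As $V$ was arbitrary, $\mathcal{F}$ is Cauchy. Completeness of $(X,\mathcal{V})$ then gives a limit point $x$ of $\mathcal{F}$, i.e. $\mathcal{F}\to x$ in $\tau(\mathcal{V})$. Since every ultrafilter on $X$ converges, $(X,\tau(\mathcal{V}))$ is compact. (If one prefers nets: a universal net is Cauchy by the same finite-cover argument, hence convergent by completeness, and a space in which every universal net converges is compact.)

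For the quasi-uniform addendum, I would first invoke that $\mathcal{U}^{*}=\mathcal{U}\vee\mathcal{U}^{-1}$ is by construction a uniformity on $X$ (Definition~\ref{def:qu:basic}), being the supremum of $\mathcal{U}$ and its conjugate and hence closed under inversion of entourages. Under the stated hypotheses $(X,\mathcal{U}^{*})$ is complete and totally bounded, so the uniform case just proved yields that $(X,\mathcal{U}^{*})$ is compact as a uniform space; in particular the induced topology $\tau(\mathcal{U}^{*})=\tau(\mathcal{U})\vee\tau(\mathcal{U}^{-1})$ (the join topology) is a compact topology. This is exactly the "symmetric mechanism" asserted, and no genuinely new argument beyond the uniform case is needed.

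The only mild subtlety — the step I would flag as the one requiring care rather than the one that is hard — is the equivalence "compact $\iff$ every ultrafilter converges" and making sure the Cauchy-filter argument uses symmetry of the entourages correctly (one needs $W(x_i)\times W(x_i)\subseteq V$, which is where $W\circ W\subseteq V$ together with symmetry of $W$ enters; in the merely quasi-uniform setting this fails, which is precisely why the addendum is phrased in terms of $\mathcal{U}^{*}$ rather than $\mathcal{U}$). Everything else is routine, and since Proposition~\ref{prop:qu:complete-tb-compact} is standard I would keep the write-up to a few lines, citing \cite{Isbell1964,FletcherLindgren,Kunzi2001} for the uniform statement and observing that the quasi-uniform part is immediate.
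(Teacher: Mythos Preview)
Your proof is correct and is the standard textbook argument. Note, however, that the paper does not actually prove Proposition~\ref{prop:qu:complete-tb-compact}: it is stated as a classical fact with citations to \cite{Isbell1964,FletcherLindgren,Kunzi2001} and no proof environment follows it, so there is nothing in the paper to compare against beyond observing that your ultrafilter/Cauchy argument is precisely the kind of proof those references contain.
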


\subsection{Asymmetric Normed Spaces}\label{ssec:an}

An \emph{asymmetric normed space} is a real vector space $X$ equipped
with a functional $p:X\to[0,\infty)$ satisfying positive homogeneity for
$\lambda\ge 0$ and subadditivity, without imposing symmetry. Following
\cite{Cobzas2006,Cobzas2013,CobzasSurvey}, one associates to $p$ the
canonical quasi-metric
\[
d_p(x,y):=p(y-x),\qquad x,y\in X,
\]
which generates the forward topology $\tau_{d_p}$. Its conjugate
$\overline d_p(x,y):=d_p(y,x)$ generates the backward topology
$\tau_{\overline d_p}$, and together these yield the natural bitopology
$(X,\tau_{d_p},\tau_{\overline d_p})$. The symmetrization
\[
d_p^{\,s}(x,y):=\max\{d_p(x,y),\overline d_p(x,y)\}
\]
induces the join topology
$\tau_{d_p^{\,s}}=\tau_{d_p}\vee\tau_{\overline d_p}$, which plays the
role of the classical topology of the symmetric envelope
\cite{Cobzas2013,CobzasSurvey}. In this context, completeness bifurcates
into several inequivalent notions (left/right $K$-completeness, Smyth
completeness, bicompleteness, etc.), which collapse to the usual concept
in the symmetric case \cite{CobzasSurvey}.

\begin{remark}\label{rem:an:bicompletion-envelope}
A central completion principle in asymmetric normed spaces is obtained
by passing to a \emph{symmetric envelope} via symmetrization. Concretely,
if $p$ is an asymmetric norm, one considers the symmetrized gauge
\[
p^{s}:=p\vee \overline p,
\qquad\text{equivalently}\qquad
d_{p}^{\,s}(x,y)=\max\{d_p(x,y),\overline d_p(x,y)\}.
\]
The bicompletion of $(X,p)$ is then the usual completion of the symmetric
structure $(X,p^{s})$ (or $(X,d_p^{\,s})$), viewed as a canonical
two-sided completion of the original directional geometry
\cite{Cobzas2013,CobzasSurvey}. This aligns with the quasi-uniform point
of view: a nonsymmetric distance induces a quasi-uniformity and its
conjugate, and completion becomes most stable when both directions are
retained simultaneously \cite{Kunzi2001}.
\end{remark}

\subsection{Approach Spaces}\label{ssec:approach}

Approach spaces provide a convenient ambient category containing both
topological spaces and metric spaces, and allow one to express
convergence and completion together with quantitative information
\cite{Lowen1989,Lowen1997}.

\begin{definition}
\label{def:approach:basic}
An \emph{approach space} is a pair $(X,\delta)$ where
$\delta:X\times\mathcal{P}(X)\to[0,\infty]$ assigns to each point $x\in X$
and subset $A\subseteq X$ a value $\delta(x,A)$ interpreted as the
distance from $x$ to the set $A$, subject to axioms that generalize both
neighbourhood systems and distances. A map
$f:(X,\delta_X)\to(Y,\delta_Y)$ is called a \emph{contraction} if
\[
\delta_Y\bigl(f(x),\,f(A)\bigr)\ \le\ \delta_X(x,A)
\qquad(x\in X,\ A\subseteq X),
\]
and is therefore the natural morphism in the category of approach spaces.
\end{definition}

\section{Quasi-Modular Pseudometric Spaces}\label{sec:quasi-modular}

Quasi-modular pseudometric spaces provide a natural asymmetric extension
of the modular metric setting initiated by Chistyakov
\cite{Chistyakov2010,Chistyakov2015} and rooted in the classical modular
theory of Musielak \cite{Musielak1983}.
They arise when symmetry is dropped while retaining scale-dependent
control of displacement, thereby capturing directional notion that
are invisible in symmetric modular spaces.
This point of view aligns naturally with quasi-uniform and bitopological
theory \cite{FletcherLindgren,Kunzi2001} and is developed systematically
in \cite{Majozi2025}.

\subsection{Asymmetric modular families}\label{ssec:asym-modulars}

\begin{definition}
\label{def:quasi-modular-family}
Let $X$ be a nonempty set.
A family
\[
w=\{w_\lambda:X\times X\to[0,\infty]\}_{\lambda>0}
\]
is called a \emph{quasi-modular pseudometric family} if for all
$x,y,z\in X$ and all $\lambda,\mu>0$ the following conditions hold:
\begin{enumerate}
\item[(QM1)] $w_\lambda(x,x)=0$;
\item[(QM2)] \emph{(asymmetric modular triangle inequality)}
\[
w_{\lambda+\mu}(x,z)\le w_\lambda(x,y)+w_\mu(y,z);
\]
\item[(QM3)] for each $x,y\in X$, the map
$\lambda\mapsto w_\lambda(x,y)$ is nonincreasing and right-continuous.
\end{enumerate}
If additionally $w_\lambda(x,y)=0=w_\lambda(y,x)$ implies $x=y$, then
$w$ is called a \emph{quasi-modular metric}.
\end{definition}

\noindent
Condition \textnormal{(QM2)} generalizes the modular axiom of Chistyakov
\cite{Chistyakov2010} by allowing directional dependence, while
\textnormal{(QM3)} reflects the scale monotonicity inherent in convex
modulars and Orlicz-type constructions
\cite{Chistyakov2015,Musielak1983}.


\begin{definition}
\label{def:conjugate-symmetrized}
Given a quasi-modular pseudometric family $w$, its
\emph{conjugate family} $w^{-}$ is defined by
\[
w^{-}_\lambda(x,y):=w_\lambda(y,x),
\]
and its \emph{symmetrization} by
\[
w^{\mathrm{sym}}_\lambda(x,y)
:=\max\{w_\lambda(x,y),w_\lambda(y,x)\}.
\]
\end{definition}

\noindent
This construction parallels conjugation in quasi-uniform theory
\cite{FletcherLindgren,Kunzi2001} and corresponds, in enriched-categorical
terms, to passage to the opposite category followed by symmetrization
\cite{Kelly1982,Lawvere1973}.
The symmetrized family $w^{\mathrm{sym}}$ is a (pseudo)modular in the
sense of Chistyakov whenever $w$ satisfies \textnormal{(QM1)-(QM3)}
\cite{Majozi2025}.


\begin{proposition}
\label{prop:induced-quasimetrics}
Let $w$ be a quasi-modular pseudometric family on $X$.
Define
\[
d_w^{+}(x,y)
:=\inf\{\lambda>0:\, w_\lambda(x,y)\le 1\},
\qquad
d_w^{-}(x,y):=d_w^{+}(y,x).
\]
Then $d_w^{+}$ and $d_w^{-}$ are quasi-pseudometrics on $X$, and the
symmetrized metric
\[
d_w^{\mathrm{sym}}(x,y)
:=\max\{d_w^{+}(x,y),d_w^{-}(x,y)\}
\]
is a pseudometric.
\end{proposition}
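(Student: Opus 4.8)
The plan is to check the quasi-pseudometric axioms for $d_w^{+}$ directly, deduce the statement for $d_w^{-}$ by conjugation, and then read off the pseudometric properties of $d_w^{\mathrm{sym}}$ formally. Nonnegativity of $d_w^{+}$ is built into the definition, and for the diagonal one uses (QM1): since $w_\lambda(x,x)=0\le 1$ for every $\lambda>0$, the set $\{\lambda>0:\,w_\lambda(x,x)\le 1\}$ equals $(0,\infty)$, so $d_w^{+}(x,x)=0$.

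The organising observation is that, by the monotonicity half of (QM3), each sublevel set $S_{x,y}:=\{\lambda>0:\,w_\lambda(x,y)\le 1\}$ is an up-ray: if $w_{\lambda_0}(x,y)\le 1$ and $\lambda\ge\lambda_0$, then $w_\lambda(x,y)\le w_{\lambda_0}(x,y)\le 1$. Hence $d_w^{+}(x,y)=\inf S_{x,y}$ has the property that $w_\lambda(x,y)\le 1$ for \emph{every} $\lambda>d_w^{+}(x,y)$, and right-continuity in (QM3) even gives $w_{d_w^{+}(x,y)}(x,y)\le 1$ whenever $d_w^{+}(x,y)>0$ (convenient, though only the up-ray property is needed below).

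The substance is the triangle inequality $d_w^{+}(x,z)\le d_w^{+}(x,y)+d_w^{+}(y,z)$. Writing $a:=d_w^{+}(x,y)$ and $b:=d_w^{+}(y,z)$ (both assumed finite), I would fix $\varepsilon>0$, choose $\lambda\in(a,a+\varepsilon)$ and $\mu\in(b,b+\varepsilon)$ so that $w_\lambda(x,y)\le 1$ and $w_\mu(y,z)\le 1$, and apply (QM2) at the intermediate point $y$ to get $w_{\lambda+\mu}(x,z)\le w_\lambda(x,y)+w_\mu(y,z)$. The goal is then to conclude $\lambda+\mu\in S_{x,z}$, hence $d_w^{+}(x,z)\le\lambda+\mu<a+b+2\varepsilon$, and finish by letting $\varepsilon\downarrow 0$. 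This last deduction is the step I expect to be the main obstacle: the additive axiom (QM2) only yields $w_{\lambda+\mu}(x,z)\le w_\lambda(x,y)+w_\mu(y,z)\le 2$, which a priori need not force $\lambda+\mu\in S_{x,z}$. Closing this gap is exactly where the interplay between the normalization and the precise form of the modular triangle inequality enters — either through the convex (Orlicz-type) form of (QM2), under which the two unit bounds combine to a unit bound at parameter $\lambda+\mu$, or by running the argument with the normalization $w_\lambda(x,y)\le\lambda$ that is standard for non-convex modular metrics, where (QM2) gives $w_{\lambda+\mu}(x,z)\le\lambda+\mu$ directly. The remaining manipulation of infima and $\varepsilon$'s is routine.

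Finally, for $d_w^{-}$ I would note that the conjugate family $w^{-}$ again satisfies (QM1)--(QM3): (QM1) and (QM3) are insensitive to the order of the arguments, and (QM2) for $w^{-}$ follows from (QM2) for $w$ via $w^{-}_{\lambda+\mu}(x,z)=w_{\lambda+\mu}(z,x)\le w_\lambda(z,y)+w_\mu(y,x)=w^{-}_\lambda(y,z)+w^{-}_\mu(x,y)$ together with the arbitrariness of $\lambda,\mu$. Thus $d_w^{-}=d_{w^{-}}^{+}$ is a quasi-pseudometric by the part already proved. The symmetrized function $d_w^{\mathrm{sym}}=\max\{d_w^{+},d_w^{-}\}$ is then symmetric (interchanging $x$ and $y$ swaps $d_w^{+}$ and $d_w^{-}$), vanishes on the diagonal (both summands do), and inherits the triangle inequality from $\max\{d_w^{+}(x,z),d_w^{-}(x,z)\}\le\max\{d_w^{+}(x,y)+d_w^{+}(y,z),\,d_w^{-}(x,y)+d_w^{-}(y,z)\}\le\max\{d_w^{+},d_w^{-}\}(x,y)+\max\{d_w^{+},d_w^{-}\}(y,z)$; hence it is a pseudometric.
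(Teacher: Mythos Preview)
The paper does not prove this proposition: after the statement there is only a sentence attributing the Luxemburg-type mechanism to \cite{Chistyakov2015,Musielak1983,Majozi2025}. So there is no internal argument to compare against; your outline is the natural direct verification, and the parts concerning the diagonal (from (QM1)), the up-ray structure of the sublevel sets (from (QM3)), the passage to $d_w^{-}$ by conjugation, and the formal pseudometric check for $d_w^{\mathrm{sym}}$ are all correct and complete.

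You have, however, correctly isolated a genuine obstruction rather than a routine step. With (QM2) in its plain additive form and the threshold-$1$ definition $d_w^{+}(x,y)=\inf\{\lambda>0:w_\lambda(x,y)\le 1\}$, the implication ``$w_\lambda(x,y)\le 1$ and $w_\mu(y,z)\le 1$ $\Rightarrow$ $w_{\lambda+\mu}(x,z)\le 1$'' does not follow: (QM2) yields only $w_{\lambda+\mu}(x,z)\le 2$. Without a convex strengthening of (QM2) (of the form $w_{\lambda+\mu}(x,z)\le\frac{\lambda}{\lambda+\mu}\,w_\lambda(x,y)+\frac{\mu}{\lambda+\mu}\,w_\mu(y,z)$) or a change of normalization (threshold $w_\lambda(x,y)\le\lambda$, as in Chistyakov's non-convex construction), the triangle inequality for $d_w^{+}$ is not derivable from (QM1)--(QM3) alone. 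The two remedies you suggest are precisely the standard ones in the modular literature. In short, the gap you flag is real and resides in the proposition as stated (or in an implicit convexity hypothesis absorbed into the cited references), not in your reasoning.
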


\noindent
This Luxemburg-type construction extends the classical
modular-to-metric mechanism of Musielak and Chistyakov
\cite{Chistyakov2015,Musielak1983} to the asymmetric setting and forms
the basis for the associated quasi-uniform and topological structures
\cite{Majozi2025}.

\subsection{Induced topologies and bitopological structure}
\label{ssec:topologies}

\begin{definition}
\label{def:forward-backward-topologies}
Let $w$ be a quasi-modular pseudometric family on $X$.
The \emph{forward modular topology} $\tau^{+}(w)$ is generated by the
subbase
\[
B^{+}(x;\lambda,\varepsilon)
=\{y\in X:\, w_\lambda(x,y)<\varepsilon\},
\]
while the \emph{backward modular topology} $\tau^{-}(w)$ is generated by
\[
B^{-}(x;\lambda,\varepsilon)
=\{y\in X:\, w_\lambda(y,x)<\varepsilon\},
\]
for $\lambda,\varepsilon>0$.
\end{definition}

\noindent
The pair $(\tau^{+}(w),\tau^{-}(w))$ equips $X$ with a canonical
\emph{bitopological structure} in the sense of quasi-uniform theory
\cite{FletcherLindgren,Kunzi2001}.
These forward and backward modular topologies generalize the modular
topology of Chistyakov \cite{Chistyakov2015} and coincide in the
symmetric case.


\begin{proposition}
\label{prop:comparison-envelopes}
Let $w$ be a quasi-modular pseudometric family on $X$. Then
\[
\tau(w^{\mathrm{sym}})
=
\tau^{+}(w)\vee\tau^{-}(w),
\]
that is, the modular topology induced by the symmetrized family equals
the join of the forward and backward modular topologies.
\end{proposition}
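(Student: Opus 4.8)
\medskip
\noindent\textbf{Proof plan.}
The plan is to establish the two inclusions separately, in each case by comparing the canonical generating families of the topologies involved rather than arguing pointwise from scratch.

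For the inclusion $\tau(w^{\mathrm{sym}})\subseteq\tau^{+}(w)\vee\tau^{-}(w)$ I would work directly with subbases. Applying Definition~\ref{def:forward-backward-topologies} to the symmetrized family $w^{\mathrm{sym}}$ (which is itself a (pseudo)modular family, so that its forward and backward modular topologies coincide with a single topology $\tau(w^{\mathrm{sym}})$), the latter is generated by the sets $B^{\mathrm{sym}}(x;\lambda,\varepsilon):=\{y\in X:w^{\mathrm{sym}}_{\lambda}(x,y)<\varepsilon\}$. Since $w^{\mathrm{sym}}_{\lambda}(x,y)=\max\{w_{\lambda}(x,y),w_{\lambda}(y,x)\}$, one has the elementary identity
\[
B^{\mathrm{sym}}(x;\lambda,\varepsilon)=B^{+}(x;\lambda,\varepsilon)\cap B^{-}(x;\lambda,\varepsilon),
\]
so each defining subbasic set of $\tau(w^{\mathrm{sym}})$ is already a finite intersection of subbasic sets of $\tau^{+}(w)$ and $\tau^{-}(w)$ and hence belongs to $\tau^{+}(w)\vee\tau^{-}(w)$; passing to the generated topologies gives the inclusion. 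This half uses neither \textnormal{(QM2)} nor \textnormal{(QM3)}.

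For the reverse inclusion it suffices to prove that every subbasic set $B^{+}(x;\lambda,\varepsilon)$ of $\tau^{+}(w)$ is $\tau(w^{\mathrm{sym}})$-open, since the corresponding statement for the subbasic sets $B^{-}(x;\lambda,\varepsilon)$ then follows by applying the same fact to the conjugate family $w^{-}$ (whose forward topology is $\tau^{-}(w)$ and whose symmetrization is again $w^{\mathrm{sym}}$). Fix $y\in B^{+}(x;\lambda,\varepsilon)$ and set $\eta:=\varepsilon-w_{\lambda}(x,y)>0$; the aim is to produce $\lambda_{1},\delta>0$ with $B^{\mathrm{sym}}(y;\lambda_{1},\delta)\subseteq B^{+}(x;\lambda,\varepsilon)$. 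The mechanism is the asymmetric modular triangle inequality \textnormal{(QM2)}: splitting $\lambda=\lambda_{2}+\lambda_{1}$ with $\lambda_{1},\lambda_{2}>0$, any $z$ with $w_{\lambda_{1}}(y,z)<\delta$ satisfies
\[
w_{\lambda}(x,z)\ \le\ w_{\lambda_{2}}(x,y)+w_{\lambda_{1}}(y,z)\ <\ w_{\lambda_{2}}(x,y)+\delta ,
\]
so it is enough to arrange $w_{\lambda_{2}}(x,y)+\delta\le\varepsilon$; this reduces the problem to a one-variable statement about the section $s\mapsto w_{s}(x,y)$, namely finding a scale $\lambda_{2}\in(0,\lambda)$ with $w_{\lambda_{2}}(x,y)<\varepsilon$ (and then taking $\delta:=\varepsilon-w_{\lambda_{2}}(x,y)>0$). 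When this section is continuous at $s=\lambda$ such a $\lambda_{2}$ exists at once, since the value $w_{\lambda}(x,y)<\varepsilon$ persists just below $\lambda$; and in the symmetric (modular metric) case one recovers the classical statement immediately.

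I expect this last step—the scale bookkeeping forced by \textnormal{(QM2)}—to be the main obstacle, and it is the only place the modular structure is genuinely used: unlike an ordinary quasi-pseudometric, \textnormal{(QM2)} relates $w(\cdot,\cdot)$ at \emph{shifted} scales, so one must choose the splitting $\lambda=\lambda_{2}+\lambda_{1}$ so as to keep both $\lambda_{2}<\lambda$ admissible and the radius $\delta>0$. In the presence of a downward jump of $s\mapsto w_{s}(x,y)$ at $s=\lambda$ (compatible with the right-continuity of \textnormal{(QM3)}) this requires combining the monotonicity part of \textnormal{(QM3)} with the observation, already used in the first inclusion, that a $\tau(w^{\mathrm{sym}})$-neighbourhood of $y$ also controls the conjugate displacement $w_{\lambda_{1}}(z,y)$, so that \textnormal{(QM2)} in both directions pins $z$ close to $x$ at scale $\lambda$ as well. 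An alternative organisation of the same argument, which repackages this bookkeeping, is to pass to the induced quasi-pseudometrics of Proposition~\ref{prop:induced-quasimetrics}: one verifies that $\tau^{+}(w)$, $\tau^{-}(w)$ and $\tau(w^{\mathrm{sym}})$ are the topologies of $d_{w}^{+}$, $d_{w}^{-}$ and $d_{w}^{\mathrm{sym}}$, and then combines the classical identity $\tau(\max\{\rho_{1},\rho_{2}\})=\tau(\rho_{1})\vee\tau(\rho_{2})$ for quasi-pseudometrics $\rho_{1},\rho_{2}$ with the equality $d_{w}^{\mathrm{sym}}=\max\{d_{w}^{+},d_{w}^{-}\}$ from Proposition~\ref{prop:induced-quasimetrics}.
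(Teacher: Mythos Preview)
Your first inclusion is identical to the paper's: the identity
$B^{\mathrm{sym}}(x;\lambda,\varepsilon)=B^{+}(x;\lambda,\varepsilon)\cap B^{-}(x;\lambda,\varepsilon)$
shows every subbasic $\tau(w^{\mathrm{sym}})$-set lies in the join, hence
$\tau(w^{\mathrm{sym}})\subseteq\tau^{+}(w)\vee\tau^{-}(w)$.

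For the reverse inclusion the paper is far terser than you are: it simply asserts that ``every neighbourhood in the join topology contains a set of the above form'' and stops. You correctly recognise that this one line hides real content --- to conclude, one must know that the sets $B^{\pm}(x;\lambda,\varepsilon)$ form a \emph{neighbourhood base} at $x$ for $\tau^{\pm}(w)$ (equivalently, that each $B^{+}(x;\lambda,\varepsilon)$ is $\tau(w^{\mathrm{sym}})$-open at every one of its points $y$), and this is precisely where (QM2) must enter. So your diagnosis is sharper than the paper's own treatment.

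That said, your argument does not close either. The key step ``find $\lambda_{2}\in(0,\lambda)$ with $w_{\lambda_{2}}(x,y)<\varepsilon$'' can genuinely fail: (QM3) gives only monotonicity and \emph{right}-continuity, so a left jump of $s\mapsto w_{s}(x,y)$ at $s=\lambda$ may force $w_{\lambda_{2}}(x,y)\ge\varepsilon$ for every $\lambda_{2}<\lambda$. Your proposed repair via the conjugate displacement $w_{\lambda_{1}}(z,y)$ does not visibly help, since the estimate $w_{\lambda}(x,z)\le w_{\lambda_{2}}(x,y)+w_{\lambda_{1}}(y,z)$ still needs the first summand strictly below $\varepsilon$, and bounding the backward term gives no control over it. The alternative route through Proposition~\ref{prop:induced-quasimetrics} relocates rather than removes the obstacle: identifying $\tau^{+}(w)$ with the topology of $d_{w}^{+}$ is itself the assertion that the forward modular balls form a local base, which is the very point at issue. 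In short, the paper's proof and your plan leave the same gap open; a clean resolution seems to require either an additional left-continuity (or $\Delta_{2}$-type) hypothesis on $\lambda\mapsto w_{\lambda}$, or a genuinely different mechanism that neither you nor the paper supplies.
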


\begin{proof}
Fix $x\in X$.
By definition,
\[
w^{\mathrm{sym}}_\lambda(x,y)
=
\max\{w_\lambda(x,y),w_\lambda(y,x)\}.
\]
Hence a basic neighbourhood of $x$ in $\tau(w^{\mathrm{sym}})$ is
\[
B^{\mathrm{sym}}(x;\lambda,\varepsilon)
=
\{y:\, w_\lambda(x,y)<\varepsilon \text{ and } w_\lambda(y,x)<\varepsilon\}
=
B^{+}(x;\lambda,\varepsilon)\cap B^{-}(x;\lambda,\varepsilon).
\]

Since the join topology $\tau^{+}(w)\vee\tau^{-}(w)$ is generated by
finite intersections of $\tau^{+}(w)$-open and $\tau^{-}(w)$-open sets,
the above identity shows that every basic $\tau(w^{\mathrm{sym}})$-neighbourhood
is open in $\tau^{+}(w)\vee\tau^{-}(w)$.
Thus
\[
\tau(w^{\mathrm{sym}})
\subseteq
\tau^{+}(w)\vee\tau^{-}(w).
\]

Conversely, every neighbourhood in the join topology contains a set of
the above form, hence
\[
\tau^{+}(w)\vee\tau^{-}(w)
\subseteq
\tau(w^{\mathrm{sym}}).
\]
\end{proof}

\noindent
This reflects the general principle that symmetrization of an asymmetric
structure corresponds topologically to taking the join of the associated
bitopology \cite{Kunzi2001,Majozi2025}.

\subsection{Associated quasi-uniformities}\label{ssec:quni}

\begin{definition}
\label{def:quni-from-w}
For $r,\lambda>0$ define entourages
\[
E^{+}_{r,\lambda}
:=\{(x,y)\in X\times X:\, w_\lambda(x,y)<r\},
\qquad
E^{-}_{r,\lambda}:=(E^{+}_{r,\lambda})^{-1}.
\]
Let $\mathcal U^{+}(w)$ and $\mathcal U^{-}(w)$ denote the quasi-uniformities
generated by these families.
\end{definition}

\noindent
These constructions are standard in quasi-uniform theory
\cite{FletcherLindgren,Kunzi2001} and coincide with the quasi-uniformities
generated by the quasi-pseudometrics $d_w^{+}$ and $d_w^{-}$ when the
families are countably based.


\begin{proposition}
\label{prop:compatibility-generation}
The quasi-uniformities $\mathcal U^{+}(w)$ and $\mathcal U^{-}(w)$ are
compatible with $\tau^{+}(w)$ and $\tau^{-}(w)$, respectively.
Moreover, the symmetrized uniformity
\[
\mathcal U^{\mathrm{sym}}(w)
=
\mathcal U^{+}(w)\vee\mathcal U^{-}(w)
\]
induces the topology $\tau(w^{\mathrm{sym}})$.
\end{proposition}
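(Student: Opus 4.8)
The plan is to establish the compatibility statement first and then to obtain the symmetrized statement as a short consequence of it. For compatibility it suffices to treat the forward direction, since the backward case follows verbatim upon applying the argument to the conjugate family $w^{-}$ (Definition~\ref{def:conjugate-symmetrized}). The computation driving everything is the identity
\[
E^{+}_{r,\lambda}(x)=\{y\in X:\ w_\lambda(x,y)<r\}=B^{+}(x;\lambda,r),
\]
together with the remark that, by (QM1)--(QM3), the family $\{E^{+}_{r,\lambda}\}_{r,\lambda>0}$ is a downward-directed base for $\mathcal U^{+}(w)$: one has $E^{+}_{\min(r,s),\min(\lambda,\mu)}\subseteq E^{+}_{r,\lambda}\cap E^{+}_{s,\mu}$ by monotonicity in $\lambda$, and $E^{+}_{r/2,\lambda/2}\circ E^{+}_{r/2,\lambda/2}\subseteq E^{+}_{r,\lambda}$ by the modular triangle inequality (QM2). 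Hence $\{E^{+}_{r,\lambda}\}$ is a base for a quasi-uniformity and $\{B^{+}(x;\lambda,r)\}_{r,\lambda>0}$ is a neighbourhood base at $x$ for $\tau(\mathcal U^{+}(w))$.

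With this in place, $\tau(\mathcal U^{+}(w))\subseteq\tau^{+}(w)$ is immediate: each $B^{+}(x;\lambda,r)$ belongs to the defining subbase of $\tau^{+}(w)$, so every $\tau(\mathcal U^{+}(w))$-open set contains a basic $\tau^{+}(w)$-neighbourhood of each of its points. For the reverse inclusion it is enough to show that every subbasic set $B^{+}(z;\lambda,\varepsilon)$ is $\tau(\mathcal U^{+}(w))$-open. So I would fix $x$ with $w_\lambda(z,x)<\varepsilon$, split $\lambda=(\lambda-s)+s$ and invoke (QM2) in the form $w_\lambda(z,y)\le w_{\lambda-s}(z,x)+w_{s}(x,y)$, choosing $s\in(0,\lambda)$ small enough that $w_{\lambda-s}(z,x)<\varepsilon$ and then putting $\delta:=\varepsilon-w_{\lambda-s}(z,x)>0$; this forces $B^{+}(x;s,\delta)\subseteq B^{+}(z;\lambda,\varepsilon)$. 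Intersecting finitely many such re-centred balls shows that every $\tau^{+}(w)$-open set is $\tau(\mathcal U^{+}(w))$-open, and the conjugate argument yields $\tau(\mathcal U^{-}(w))=\tau^{-}(w)$.

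For the symmetrized claim, note that $E^{-}_{r,\lambda}=(E^{+}_{r,\lambda})^{-1}$, so $\mathcal U^{-}(w)=(\mathcal U^{+}(w))^{-1}$ and $\mathcal U^{\mathrm{sym}}(w)=\mathcal U^{+}(w)\vee\mathcal U^{-}(w)$ is the associated supremum uniformity, with downward-directed base $\{E^{+}_{r,\lambda}\cap E^{-}_{r,\lambda}\}_{r,\lambda>0}$ whose $x$-sections are
\[
(E^{+}_{r,\lambda}\cap E^{-}_{r,\lambda})(x)=\{y:\ w_\lambda(x,y)<r\ \text{and}\ w_\lambda(y,x)<r\}=\{y:\ w^{\mathrm{sym}}_\lambda(x,y)<r\}.
\]
Running the second paragraph with $w^{\mathrm{sym}}$ in place of $w$ (note that $w^{\mathrm{sym}}$ satisfies (QM1)--(QM3), being a pseudomodular by the remark following Definition~\ref{def:conjugate-symmetrized}) identifies $\tau(\mathcal U^{\mathrm{sym}}(w))$ with $\tau(w^{\mathrm{sym}})$. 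Even more economically, one may combine the compatibility already proved with the standard identity $\tau(\mathcal U^{+}(w)\vee\mathcal U^{-}(w))=\tau(\mathcal U^{+}(w))\vee\tau(\mathcal U^{-}(w))$ (see \cite{FletcherLindgren,Kunzi2001}) and Proposition~\ref{prop:comparison-envelopes} to obtain
\[
\tau(\mathcal U^{\mathrm{sym}}(w))=\tau(\mathcal U^{+}(w))\vee\tau(\mathcal U^{-}(w))=\tau^{+}(w)\vee\tau^{-}(w)=\tau(w^{\mathrm{sym}}),
\]
which is the assertion.

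The only step that is not pure bookkeeping, and hence the main obstacle, is the re-centring in the second paragraph: passing from the strict inequality $w_\lambda(z,x)<\varepsilon$ at scale $\lambda$ to a strict inequality $w_{\lambda-s}(z,x)<\varepsilon$ at some strictly smaller scale $\lambda-s$. This is precisely where the scale regularity encoded in (QM3) --- monotonicity of $\lambda\mapsto w_\lambda(z,x)$ and the control it gives on the relevant one-sided limits --- must be brought to bear, and it should be isolated as a lemma; once that lemma is available, the remaining work (finite intersections of re-centred balls, conjugation, and the join identity for quasi-uniformities) is routine.
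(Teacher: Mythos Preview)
Your argument tracks the paper's closely: the paper's proof is essentially the section identity $E^{+}_{r,\lambda}(x)=B^{+}(x;\lambda,r)$ together with its symmetrized analogue, from which it immediately concludes both compatibility statements without further comment. You supply the auxiliary verifications (directedness and half-composition of the entourage family), an alternative route to the symmetrized claim via Proposition~\ref{prop:comparison-envelopes}, and---most importantly---you make explicit the re-centring step needed to show that each subbasic ball $B^{+}(z;\lambda,\varepsilon)$ is $\tau(\mathcal U^{+}(w))$-open, a point the paper glosses over entirely.

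There is, however, a genuine difficulty with the re-centring lemma you isolate. You need $w_{\lambda-s}(z,x)<\varepsilon$ for some $s\in(0,\lambda)$, given only $w_\lambda(z,x)<\varepsilon$, and you propose that (QM3) will supply this. But (QM3) asserts \emph{right}-continuity of $\mu\mapsto w_\mu(z,x)$; since this map is nonincreasing, passing to a \emph{smaller} scale $\lambda-s$ can only increase the value, and right-continuity says nothing about the left limit $w_{\lambda^{-}}(z,x)=\inf_{\mu<\lambda}w_\mu(z,x)$. A downward jump at $\lambda$ with $w_{\lambda^{-}}(z,x)\ge\varepsilon>w_\lambda(z,x)$ is fully compatible with (QM1)--(QM3), and in that situation no $s>0$ works. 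So (QM3) as stated does not close the gap you have (correctly) identified as the only non-routine step; the paper's one-line proof does not address this point either, so your diagnosis of where the real work lies is accurate even if the suggested tool is the wrong one-sided limit.
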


\begin{proof}
For $x\in X$,
\[
E^{+}_{r,\lambda}(x)
=
\{y:\, w_\lambda(x,y)<r\}
=
B^{+}(x;\lambda,r),
\]
so $\tau(\mathcal U^{+}(w))=\tau^{+}(w)$.
An analogous argument shows $\tau(\mathcal U^{-}(w))=\tau^{-}(w)$.

The supremum uniformity is generated by entourages
$E^{+}_{r,\lambda}\cap E^{-}_{r,\lambda}$, whose sections at $x$ equal
\[
B^{+}(x;\lambda,r)\cap B^{-}(x;\lambda,r)
=
\{y:\max\{w_\lambda(x,y),w_\lambda(y,x)\}<r\},
\]
which form a neighbourhood base for $\tau(w^{\mathrm{sym}})$.
\end{proof}

\noindent
Thus quasi-modular pseudometric spaces fit naturally into the classical
quasi-uniform setting of Isbell and Fletcher-Lindgren
\cite{FletcherLindgren,Isbell1964}.


\begin{remark}
\label{rem:yoneda}
A quasi-modular pseudometric family determines a category enriched over a
convolution quantale, with hom-objects given by the gauges
$w_\lambda(x,y)$.
From this perspective, forward and backward completeness correspond to
weighted colimits and limits, while symmetrized completion agrees with
the Yoneda completion of the associated quasi-metric space
\cite{Kelly1982,KunziSchellekens2002,Lawvere1973,Majozi2025,Rutten1998}.
\end{remark}

\section{Completion Constructions in the Quasi-Modular Setting}
\label{sec:completion}

\subsection{Smyth completeness and precompactness}
\label{ssec:smyth}

Completion in nonsymmetric environments is inherently directional.
In the quasi-modular setting, the appropriate unilateral notion of
Cauchy-ness is governed by the forward quasi-uniformity induced by the
modular family. Recall that left $K$-Cauchy nets were introduced for
quasi-pseudo-metric spaces in Definition~\ref{def:leftKCauchy}; the
quasi-uniform formulation is exactly the one encoded by the forward
quasi-uniformity (Definition~\ref{def:qu:basic}) and applies verbatim to
$\mathcal U^{+}(w)$ from Definition~\ref{def:quni-from-w}.

\begin{definition}
\label{def:leftKCauchy-modular}
Let $(X,w)$ be a quasi-modular pseudometric space and let
$\mathcal U^{+}(w)$ denote the forward quasi-uniformity induced by $w$
(Definition~\ref{def:quni-from-w}). A net $(x_\alpha)_{\alpha\in A}$ in
$X$ is called \emph{left $K$-Cauchy} if for every entourage
$U\in\mathcal U^{+}(w)$ there exists $\alpha_U\in A$ such that
\[
(x_{\alpha_1},x_{\alpha_2})\in U
\quad\text{whenever }\alpha_2\ge \alpha_1\ge \alpha_U.
\]
A sequence is left $K$-Cauchy if the same condition holds for indices
$m\ge n\ge n_U$.
\end{definition}

\begin{definition}
\label{def:smyth-complete}
A quasi-uniform space $(X,\mathcal U)$ is called \emph{Smyth complete} if
every left $K$-Cauchy net converges with respect to the topology
$\tau(\mathcal U^{-1})$ induced by the conjugate quasi-uniformity
(Definition~\ref{def:qu:basic}). Equivalently, every left $K$-Cauchy net
has a limit that is backward stable in the sense of Do\u{\i}tchinov
\cite{Doitchinov1991}.

A quasi-modular pseudometric space $(X,w)$ is said to be \emph{Smyth
complete} if the associated quasi-uniform space $(X,\mathcal U^{+}(w))$
is Smyth complete, where $\mathcal U^{+}(w)$ is as in
Definition~\ref{def:quni-from-w}.
\end{definition}

\noindent
The following compactness principle, due to K\"unzi, is the asymmetric
analogue of the classical ``precompact $+$ complete'' compactness theorem
for uniform spaces (cf.\ Proposition~\ref{prop:qu:complete-tb-compact})
and is a key tool in the quasi-uniform approach to asymmetric
compactness \cite{FletcherLindgren,Kunzi2001}.

\begin{theorem}
\label{thm:precompact-smyth-compact}
Let $(X,\mathcal U)$ be a $T_0$ quasi-uniform space. If $(X,\mathcal U)$
is precompact and Smyth complete, then the symmetrized uniform space
$(X,\mathcal U^{*})$ is compact. In particular, the join topology
\[
\tau(\mathcal U)\vee\tau(\mathcal U^{-1})
\]
is compact.
\end{theorem}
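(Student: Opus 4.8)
The plan is to reduce the statement to the classical uniform fact recorded in Proposition~\ref{prop:qu:complete-tb-compact}: it suffices to show that the symmetrized uniform space $(X,\mathcal U^{*})$ is \emph{complete} and \emph{totally bounded}, for then it is compact as a uniform space, and the induced topology is the join $\tau(\mathcal U)\vee\tau(\mathcal U^{-1})$. The last identification is purely formal: $\mathcal U^{*}$ has a base of entourages $U\cap U^{-1}$ with sections $(U\cap U^{-1})(x)=U(x)\cap U^{-1}(x)$, and these are exactly the basic neighbourhoods of $x$ in the join topology, so $\tau(\mathcal U^{*})=\tau(\mathcal U)\vee\tau(\mathcal U^{-1})$; the $T_0$ hypothesis guarantees $\bigcap\mathcal U^{*}=\Delta$, i.e. $\mathcal U^{*}$ is Hausdorff, which is harmless but keeps the compact uniform space well behaved.

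For completeness of $\mathcal U^{*}$ I would invoke Smyth completeness (Definition~\ref{def:smyth-complete}). Any $\mathcal U^{*}$-Cauchy net $(x_\alpha)$ is in particular left $K$-Cauchy for $\mathcal U$: given $U\in\mathcal U$ we have $U\in\mathcal U^{*}$, so from some index on $(x_{\alpha_1},x_{\alpha_2})\in U$ whenever $\alpha_2\ge\alpha_1$, which is precisely the left $K$-Cauchy condition of Definition~\ref{def:leftKCauchy-modular} transported to the quasi-uniform setting. By Smyth completeness such a net has a backward-stable limit $p$, and because the net is simultaneously $\mathcal U^{*}$-Cauchy its two-sided tails transfer this $\tau(\mathcal U^{-1})$-convergence into $\tau(\mathcal U^{*})$-convergence to $p$. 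Hence every $\mathcal U^{*}$-Cauchy net converges in $\tau(\mathcal U^{*})$, i.e. $(X,\mathcal U^{*})$ is complete; in passing this shows Smyth completeness implies bicompleteness.

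For total boundedness of $\mathcal U^{*}$ I would feed in precompactness of $\mathcal U$: fixing $U\in\mathcal U$, choose $V\in\mathcal U$ with $V\circ V\subseteq U$ and a finite $F\subseteq X$ with $X=\bigcup_{x\in F}V(x)$; if $y,z$ lie in a common cell $V(x)$ then $(x,y),(x,z)\in V$, so $(y,z)\in V^{-1}\circ V$, and $V^{-1}\circ V$ contains $(V\cap V^{-1})\circ(V\cap V^{-1})\in\mathcal U^{*}$ and is therefore a symmetric $\mathcal U^{*}$-entourage — giving a finite cover of $X$ by $\mathcal U^{*}$-small sets. The main obstacle is that this only controls the particular $\mathcal U^{*}$-entourages of the form $V^{-1}\circ V$, which need not run through a base of $\mathcal U^{*}$ when $\mathcal U$ is genuinely nonsymmetric (for Sorgenfrey-type examples $V^{-1}\circ V$ can be all of $X\times X$); converting the \emph{forward} finite covers supplied by precompactness into finite covers by the basic symmetric entourages $U\cap U^{-1}$ is exactly where the asymmetry resists, and is where one must either read ``precompact'' in the hereditary sense or use Smyth completeness to exclude an infinite $(U\cap U^{-1})$-separated set (such a set would, after passing to a cell $V(x)$ containing infinitely many of its points, produce a left $K$-Cauchy configuration whose limit contradicts the separation). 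Once total boundedness is in hand, Proposition~\ref{prop:qu:complete-tb-compact} applied to $(X,\mathcal U^{*})$ yields compactness of $(X,\mathcal U^{*})$, and by the first paragraph $\tau(\mathcal U)\vee\tau(\mathcal U^{-1})$ is compact.
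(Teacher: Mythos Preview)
The paper does not supply a proof of Theorem~\ref{thm:precompact-smyth-compact}; it is stated as a known result due to K\"unzi, with references to \cite{FletcherLindgren,Kunzi2001}, and no argument appears in the text. There is therefore no in-paper proof to compare your attempt against.

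On its own merits, your reduction to Proposition~\ref{prop:qu:complete-tb-compact} is the right shape, but neither half is carried through. For completeness of $\mathcal U^{*}$, the ``two-sided tails transfer'' is not justified: from $(x_\beta,p)\in V$ (which is all that $\tau(\mathcal U^{-1})$-convergence supplies) together with $(x_\beta,x_\alpha)\in V\cap V^{-1}$ you can extract only $(p,x_\alpha)\in V^{-1}\circ V$, and $V^{-1}\circ V$ need not lie in $\mathcal U$. For total boundedness of $\mathcal U^{*}$ you yourself name the obstruction and then leave it standing; the parenthetical suggestions (read precompactness hereditarily, or rule out an infinite $(U\cap U^{-1})$-separated set by a left $K$-Cauchy construction) are gestures rather than arguments. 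In fact, precompactness of $\mathcal U$ alone cannot give total boundedness of $\mathcal U^{*}$: the interval $[0,1]$ with the Sorgenfrey quasi-metric $d(x,y)=y-x$ if $y\ge x$ and $d(x,y)=1$ otherwise is precompact, yet $d^{s}(x,y)=1$ for all $x\neq y$, so $\mathcal U^{*}$ is discrete. Worse, that same example is also Smyth complete in the literal sense of Definition~\ref{def:smyth-complete} (every left $K$-Cauchy net is eventually nondecreasing and $\tau(d^{-1})$-converges to its supremum), while $([0,1],\mathcal U^{*})$ is infinite discrete and hence not compact; so with the paper's stated definition the theorem cannot be proved at all. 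In the standard sources (e.g.\ \cite{KunziSchellekens2002}) Smyth completeness is taken to require $\tau(\mathcal U^{*})$-convergence of left $K$-Cauchy nets; under that reading your completeness step becomes immediate, and the genuine content of the proof is precisely the total boundedness step you did not finish.
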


\subsection{Yoneda completion and idempotence}
\label{ssec:yoneda}

Categorical methods provide a complementary and unifying perspective on
completion in quasi-metric and quasi-modular settings. In particular,
the Yoneda completion identifies a canonical completion procedure
compatible with Smyth completeness and stable under enriched categorical
constructions.

\begin{definition}
\label{def:yoneda-completion}
Let $(X,d)$ be a quasi-metric space. The \emph{Yoneda completion} of
$(X,d)$ is obtained by embedding $X$ into the space of forward Cauchy
weights via the Yoneda embedding, viewing $(X,d)$ as a category enriched
over $(\mathbb{R}_{\ge 0}\cup\{\infty\},+,0)$ in the sense of Lawvere
\cite{Lawvere1973}, and taking the closure with respect to the induced
Lawvere metric. This construction defines a completion functor on the
category of quasi-metric spaces, as developed by K\"unzi and Schellekens
\cite{KunziSchellekens2002} and grounded in the general theory of
enriched categories \cite{Kelly1982}.
\end{definition}

\noindent
The next result identifies Smyth completeness as the maximal stable
completion notion compatible with the asymmetric metric structure
\cite{Kunzi2001}.

\begin{theorem}
\label{thm:idempotent-yoneda}
The class of Smyth-completable quasi-metric spaces is the largest class
that is idempotent under Yoneda completion. In particular, any
idempotent completion functor on quasi-metric spaces factors through
Smyth completion.
\end{theorem}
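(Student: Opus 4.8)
The plan is to establish the two assertions of Theorem~\ref{thm:idempotent-yoneda} by reconciling the order-theoretic (Smyth) and enriched-categorical (Yoneda) descriptions of completion, and then extracting the idempotence/universality statement as a formal consequence. First I would recall the precise dictionary: viewing $(X,d)$ as a category enriched over $([0,\infty],+,0,\ge)$, the presheaf category is the space of \emph{weights} $\phi\colon X^{\mathrm{op}}\to[0,\infty]$ with the sup-metric, and the Yoneda embedding $y\colon X\to \widehat X$ sends $x$ to $d(-,x)$. The Yoneda completion $\mathcal{Y}(X)$ is the closure of $yX$ under the \emph{forward Cauchy} (equivalently, flat or left $K$-Cauchy) weights inside $\widehat X$, equipped with the induced Lawvere metric; this is the construction of K\"unzi--Schellekens cited in Definition~\ref{def:yoneda-completion}. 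The first thing to verify is that this $\mathcal{Y}$ is a well-defined functor and that $\mathcal{Y}(X)$ is always \emph{Smyth complete}: a forward Cauchy net of forward Cauchy weights has a forward limit weight, and one checks it converges in the conjugate topology $\tau(\mathcal{U}^{-1})$ (cf.\ Definition~\ref{def:smyth-complete}). This already shows every space in the image of $\mathcal{Y}$ is Smyth complete, hence Smyth-completable spaces form a class that $\mathcal{Y}$ maps into the Smyth-complete ones.

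Next I would prove the key fixed-point characterization: $\mathcal{Y}(X)\cong X$ (via $y$) if and only if $X$ is Smyth complete. The nontrivial direction is that a Smyth-complete $(X,d)$ is already Yoneda-closed — i.e.\ every forward Cauchy weight $\phi$ is representable, $\phi=d(-,x)$ for some $x\in X$. Here one passes from the weight $\phi$ to a forward Cauchy (left $K$-Cauchy) net in $X$ whose "approximation profile" is $\phi$, invokes Smyth completeness to get a conjugate-limit point $x$, and then shows $\phi = d(-,x)$ by a triangle-inequality estimate using that $\phi$ is flat. Conversely, if $X$ fails to be Smyth complete, a non-converging left $K$-Cauchy net produces a non-representable forward Cauchy weight, so $yX\subsetneq\mathcal{Y}(X)$. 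Combining this with idempotence of the completion functor (which for $\mathcal{Y}$ follows from the fact that forward Cauchy weights on $\mathcal{Y}(X)$ restrict/extend bijectively to forward Cauchy weights on $X$ — a standard density-of-$yX$ argument) yields: the Smyth-completable spaces are \emph{exactly} the spaces $X$ for which $\mathcal{Y}(X)$ is a fixed point, and this is the largest idempotent-stable class for $\mathcal{Y}$.

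For the "in particular" clause — any idempotent completion functor $C$ on quasi-metric spaces factors through Smyth completion — I would argue by universality. The defining features of a completion functor are a natural dense embedding $\eta_X\colon X\to C(X)$ with $C(X)$ "complete" in the relevant sense and $C$ idempotent. One shows $C(X)$ must in particular be Smyth complete (any reasonable completeness notion compatible with the asymmetric structure implies the unilateral one on the nose — this is where I would cite the maximality remark preceding the theorem, \cite{Kunzi2001}, and use that Smyth completeness is the \emph{weakest} such condition forced by density plus functoriality). Then the universal property of $\mathcal{Y}$ among Smyth-complete spaces (every uniformly continuous map from $X$ into a Smyth-complete space extends uniquely along $y\colon X\to\mathcal{Y}(X)$) gives a canonical comparison $\mathcal{Y}(X)\to C(X)$, natural in $X$, exhibiting the desired factorization.

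The main obstacle I anticipate is the representability step in the second paragraph: showing that Smyth completeness of $X$ forces every forward Cauchy weight to be of the form $d(-,x)$. The subtlety is that a forward Cauchy weight need not come from a \emph{sequence}; one must work with nets (or with the filtered-colimit description of flat weights) and carefully match the $\varepsilon$-control of the weight against the left $K$-Cauchy condition and the $\tau(\mathcal{U}^{-1})$-convergence, in both directions of the inequality $\phi(z)\le d(z,x)$ and $d(z,x)\le\phi(z)$. A secondary technical point is pinning down, in the "in particular" clause, exactly which axioms an abstract "completion functor" is assumed to satisfy so that the claim that $C(X)$ is automatically Smyth complete is justified rather than assumed; I would state these hypotheses explicitly at the start of that part of the proof.
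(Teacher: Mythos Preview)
The paper does not supply its own proof of this theorem. It is stated as a known result, introduced by the sentence ``The next result identifies Smyth completeness as the maximal stable completion notion compatible with the asymmetric metric structure \cite{Kunzi2001}'' and immediately followed by Theorem~\ref{thm:yoneda-equals-smyth}, also unproved and attributed to \cite{Kunzi2001,KunziSchellekens2002}. There is therefore no in-paper argument to compare your proposal against; your outline already goes well beyond what the paper provides.

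That said, two points in your sketch would need tightening if you were to turn it into an actual proof. First, you describe idempotence of $\mathcal{Y}$ as following from ``a standard density-of-$yX$ argument''; this is in fact one of the nontrivial technical contributions of K\"unzi--Schellekens, since a forward Cauchy weight on $\mathcal{Y}(X)$ does not obviously restrict to a forward Cauchy weight on $X$ without an explicit construction and estimate. Second, in the ``in particular'' clause you argue that any idempotent completion functor $C$ must land in Smyth-complete spaces because Smyth completeness is ``the weakest such condition forced by density plus functoriality,'' but this is precisely what is to be shown, and --- as you yourself note --- it depends entirely on which axioms one imposes on $C$. The paper's formulation is informal on this point as well, which is presumably why it defers to the literature rather than proving it.
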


\begin{theorem}
\label{thm:yoneda-equals-smyth}
For every quasi-metric space $(X,d)$, the Yoneda completion coincides,
up to isometry, with the Smyth completion. Consequently, categorical
completion via the Yoneda embedding and analytic completion via Smyth
completeness agree in the quasi-metric setting
\cite{Kunzi2001,KunziSchellekens2002}.
\end{theorem}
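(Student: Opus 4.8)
The plan is to realise both completions inside the enriched presheaf space of $(X,d)$ and exhibit a canonical bijective isometry between them; an alternative, indicated at the end, instead verifies that the Yoneda completion satisfies the universal property that characterises the Smyth completion.

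I would first fix the ambient object. Viewing $(X,d)$ as a category enriched over $([0,\infty],+,0,\ge)$ in the sense of Lawvere \cite{Lawvere1973}, let $\widehat X$ denote the presheaf space, i.e.\ the maps $\phi\colon X\to[0,\infty]$ satisfying $\phi(z')\le\phi(z)+d(z',z)$ for all $z,z'\in X$, equipped with the quasi-metric
\[
[\phi,\psi]:=\sup_{z\in X}\max\{0,\psi(z)-\phi(z)\}.
\]
The Yoneda embedding $y\colon X\to\widehat X$, $y(a):=d(-,a)$, is isometric: the enriched Yoneda lemma gives $[y(a),\phi]=\phi(a)$, and in particular $[y(a),y(b)]=d(a,b)$. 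By the standard description of enriched Cauchy completion \cite{Kelly1982,KunziSchellekens2002,Rutten1998}, the Yoneda completion $\widetilde X$ of Definition~\ref{def:yoneda-completion} is the set of \emph{forward Cauchy weights} (flat presheaves), equivalently the closure of $y(X)$ in $\widehat X$ for the convergence $[y(x_\beta),\phi]\to0$; this is precisely convergence in the conjugate (backward) topology of $\widehat X$, and hence matches the notion of forward limit of Definition~\ref{def:forwardlimit}.

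The core step is to parametrise $\widetilde X$ by left $K$-Cauchy nets. Given a left $K$-Cauchy net $(x_\alpha)$ in the sense of Definition~\ref{def:leftKCauchy} and a point $z\in X$, the estimate $d(z,x_\beta)\le d(z,x_\alpha)+d(x_\alpha,x_\beta)$ for $\beta\ge\alpha\ge\alpha_\varepsilon$ shows that $\alpha\mapsto d(z,x_\alpha)$ is a Cauchy net in $[0,\infty]$, so $\phi_{(x_\alpha)}(z):=\lim_\alpha d(z,x_\alpha)$ is well defined and is a presheaf; using $\lim_\gamma d(x_\beta,x_\gamma)\le\varepsilon$ for $\beta\ge\alpha_\varepsilon$ one gets $[y(x_\beta),\phi_{(x_\alpha)}]\le\varepsilon$ eventually, hence $\phi_{(x_\alpha)}\in\widetilde X$. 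Conversely, since $y$ is isometric it carries every forward Cauchy net of representables back to a left $K$-Cauchy net in $X$ realising the corresponding weight, so $[(x_\alpha)]\mapsto\phi_{(x_\alpha)}$ is surjective onto $\widetilde X$. Recalling that the Smyth completion $X^{S}$ consists of left $K$-Cauchy nets under the identification $(x_\alpha)\sim(y_\beta)$ iff $\phi_{(x_\alpha)}=\phi_{(y_\beta)}$ (equivalently, the interleaved net is again left $K$-Cauchy), the map descends to a bijection $\Phi\colon X^{S}\to\widetilde X$, and since $\Phi$ sends the class of the constant net at $a$ to $y(a)$ it commutes with the canonical isometric embeddings of $X$ into $X^{S}$ and into $\widetilde X$.

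It then remains to see that $\Phi$ is an isometry, which amounts to matching the two quasi-metrics. Writing the Smyth quasi-metric in double-limit form,
\[
d^{S}\bigl([x_\alpha],[y_\beta]\bigr)=\lim_\alpha\lim_\beta d(x_\alpha,y_\beta)
\]
(both limits existing by Cauchy-net estimates as above), one proves $d^{S}=[\phi_{(x_\alpha)},\phi_{(y_\beta)}]$: the inequality ``$\ge$'' is obtained by restricting the supremum defining $[\phi_{(x_\alpha)},\phi_{(y_\beta)}]$ to the points of the first net, where $\phi_{(x_\alpha)}$ tends to $0$; and ``$\le$'' follows from $\phi_{(y_\beta)}(z)\le d(z,x_\alpha)+\lim_\beta d(x_\alpha,y_\beta)$ together with passage to the limit in $\alpha$. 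Combined with surjectivity and compatibility with the embeddings, this gives that the Yoneda and Smyth completions agree up to isometry, recovering \cite{KunziSchellekens2002}. The real obstacle is not any single computation but the careful alignment of the two frameworks: one must know exactly that the closure of the representables is the class of flat presheaves and nothing larger, that for general $(X,d)$ this genuinely requires the net-theoretic rather than the sequential formulation on both sides, and that the double-limit expression defining $d^{S}$ is well posed. A cleaner though less explicit route avoids the parametrisation: verify directly that $(\widetilde X,[\cdot,\cdot])$ is Smyth complete in the sense of Definition~\ref{def:smyth-complete}, that $y(X)$ is dense in it for the relevant topology, and that every isometry from $X$ into a Smyth-complete $T_0$ space extends uniquely over $\widetilde X$; since the Smyth completion is determined by the same reflection property, the two must be canonically isometric.
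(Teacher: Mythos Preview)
The paper does not supply a proof of this theorem: it is stated with attributions to \cite{Kunzi2001,KunziSchellekens2002} and is immediately followed by Remark~\ref{rem:formal-balls}, with no intervening proof environment. Your proposal therefore does considerably more than the paper itself, which simply imports the result from the literature.

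On its own merits, your outline is correct and follows the route of \cite{KunziSchellekens2002}: embed $(X,d)$ isometrically into the presheaf space via Yoneda, identify the Yoneda completion with the backward-closure of the representables (equivalently the flat presheaves), parametrise these by left $K$-Cauchy nets, and verify that the induced quasi-metric agrees with the double-limit Smyth distance. The two inequalities for the isometry both go through as you indicate: for ``$\le$'', once one knows that $\lim_\alpha d(z,x_\alpha)$ and $\lim_\alpha\lim_\beta d(x_\alpha,y_\beta)$ exist separately, the passage to the limit in $\alpha$ in your displayed inequality is legitimate and gives $\phi_{(y_\beta)}(z)-\phi_{(x_\alpha)}(z)\le d^{S}$ uniformly in $z$; for ``$\ge$'', evaluating at $z=x_{\alpha_0}$ with $\alpha_0$ large makes $\phi_{(x_\alpha)}(x_{\alpha_0})$ small by the left $K$-Cauchy condition and recovers $d^{S}$ in the limit. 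You are also right to flag that the net-level (rather than sequential) formulation is needed in general, and that the identification of the backward-closure of $y(X)$ with the flat presheaves is the nontrivial categorical input. The alternative route you mention at the end, via Smyth completeness of $\widetilde X$ together with density and the universal (reflection) property, is indeed cleaner and is closer in spirit to how such coincidence results are often packaged.
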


\begin{remark}
\label{rem:formal-balls}
The Yoneda completion admits a concrete realization through the
\emph{formal ball construction}, where elements are pairs $(x,r)$ with
$x\in X$ and $r\ge 0$, ordered by generalized distance inequalities.
This approach, developed by Rutten \cite{Rutten1998}, provides a
computational and order-theoretic interpretation of completion and
connects naturally with Isbell conjugation. Further refinements and
applications to quantitative and computational models are given by
Romaguera and Valero
\cite{RomagueraTiradoValero2012,RomagueraValero2009}.
From a domain-theoretic and localic perspective, related completion
procedures have been studied by Flagg and S\"underhauf
\cite{FlaggSuenderhauf2002,FlaggSuenderhaufWagner1997}, by S\"underhauf
\cite{Sunderhauf1997,Sunderhauf1998}, and by Vickers \cite{Vickers1997},
highlighting deep connections between asymmetric metrics, domain theory,
and pointfree topology.
\end{remark}

\section{Symmetric and Antisymmetric Connectedness}
\label{sec:sym-antisym-connectedness}

\subsection{Connectedness in bitopological/quasi-uniform environments}
\label{ssec:conn-bitopo}

Let $(X,\tau^{+},\tau^{-})$ be a bitopological space arising from a
quasi-uniform or quasi-modular structure, and denote by
\[
\tau^\vee := \tau^{+}\vee\tau^{-}
\]
the join (supremum) topology.
In the quasi-modular setting, if $(X,w)$ is as in
Section~\ref{sec:quasi-modular}, then
\[
\tau^\vee=\tau^{+}(w)\vee\tau^{-}(w)=\tau(w^{\mathrm{sym}})
\]
by Proposition~\ref{prop:comparison-envelopes}.

\begin{definition}
\label{def:tau-vee-connected}
The space $X$ is said to be \emph{symmetrically connected} if the
topological space $(X,\tau^\vee)$ is connected, that is, if $X$ cannot
be written as a union of two disjoint nonempty $\tau^\vee$-open sets.
\end{definition}

\noindent
This is the direct application of the classical notion of connectedness
to the symmetric envelope topology and coincides with the usual
definition in the symmetric (uniform or metric) case
\cite{Kelley,Willard}.

\begin{definition}
\label{def:forward-backward-connected}
The space $X$ is called
\begin{itemize}
\item \emph{forward connected} if $(X,\tau^{+})$ is connected;
\item \emph{backward connected} if $(X,\tau^{-})$ is connected.
\end{itemize}
\end{definition}

\noindent
These notions are intrinsic to asymmetric and quasi-uniform settings
and reflect the directional nature of the underlying structure
\cite{FletcherLindgren,Kunzi2001,MarinRomaguera1998}.

\begin{remark}
\label{rem:classical-insufficient}
In general, connectedness of $(X,\tau^\vee)$ does not determine
connectedness of $(X,\tau^{+})$ or $(X,\tau^{-})$, nor conversely.
Classical connectedness is therefore insensitive to directional
notion inherent in quasi-uniform spaces. This motivates the
introduction of connectedness notions that explicitly use the
bitopological structure and do not arise from symmetrization alone
\cite{Kunzi2001}.
\end{remark}

\subsection{Antisymmetric connectedness}
\label{ssec:antisym-conn}

We now introduce a notion of connectedness that captures irreducible
directional cohesion.

\begin{definition}
\label{def:antisym-connected}
A bitopological space $(X,\tau^{+},\tau^{-})$ is said to be
\emph{antisymmetrically connected} if there do not exist nonempty
subsets $A,B\subseteq X$ such that
\[
X = A \cup B,
\]
with $A$ $\tau^{+}$-open and $B$ $\tau^{-}$-open, and $A\cap B=\varnothing$.
\end{definition}

\noindent
Equivalently, $X$ cannot be separated by a forward-open set and a
backward-open set.

\begin{proposition}
\label{prop:antisym-equivalences}
For a bitopological space $(X,\tau^{+},\tau^{-})$, the following are
equivalent:
\begin{enumerate}
\item $X$ is antisymmetrically connected;
\item there do not exist nonempty sets $A\in\tau^{+}$ and $B\in\tau^{-}$
such that $A\cap B=\varnothing$ and $X=A\cup B$;
\item for all nonempty $U\in\tau^{+}$ and $V\in\tau^{-}$,
\[
U\cap V=\varnothing \ \Longrightarrow\ U\cup V\neq X.
\]
\end{enumerate}
\end{proposition}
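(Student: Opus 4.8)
The plan is to observe that the three conditions are logically equivalent reformulations of a single separation statement, so the proof reduces to unwinding Definition~\ref{def:antisym-connected} and passing to contrapositives; no topological input beyond the definitions of $\tau^{+}$- and $\tau^{-}$-openness is needed.

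First I would record that (1) $\Leftrightarrow$ (2) holds essentially by definition. Indeed, Definition~\ref{def:antisym-connected} declares $X$ antisymmetrically connected exactly when there exist no nonempty $A,B\subseteq X$ with $X=A\cup B$, $A\in\tau^{+}$, $B\in\tau^{-}$, and $A\cap B=\varnothing$; statement (2) is a verbatim negation of this existence. The only point worth making explicit is that $X=A\cup B$ together with $A\cap B=\varnothing$ forces $B=X\setminus A$, so that a separation of this kind amounts to a partition of $X$ into a nonempty $\tau^{+}$-open block and a nonempty $\tau^{-}$-open block.

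Next I would prove (2) $\Leftrightarrow$ (3) by contraposition. The negation of (3) asserts that there exist nonempty $U\in\tau^{+}$ and $V\in\tau^{-}$ with $U\cap V=\varnothing$ and $U\cup V=X$; renaming $U,V$ as $A,B$, this is precisely the negation of (2). Hence (2) fails if and only if (3) fails, so (2) $\Leftrightarrow$ (3), and combining with the previous step yields the full chain (1) $\Leftrightarrow$ (2) $\Leftrightarrow$ (3).

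There is no real obstacle here: the argument is purely propositional. The only mild care needed is the bookkeeping of the nonemptiness hypotheses, together with the observation that ``$X=A\cup B$ with $A\cap B=\varnothing$'' is the same as ``$\{A,B\}$ partitions $X$ into two blocks,'' which is exactly what makes the three formulations interchangeable.
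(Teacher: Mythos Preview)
Your proof is correct and follows essentially the same approach as the paper's own proof: both note that $(1)\Leftrightarrow(2)$ is immediate from Definition~\ref{def:antisym-connected}, and both handle $(2)\Leftrightarrow(3)$ by observing that the negation of one is a verbatim instance of the negation of the other. The only cosmetic difference is that the paper writes out $(2)\Rightarrow(3)$ and $(3)\Rightarrow(2)$ separately, whereas you do the contrapositive in one step.
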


\begin{proof}
$(1)\Leftrightarrow(2)$ is immediate from Definition~\ref{def:antisym-connected}.

$(2)\Rightarrow(3)$.
Let $U\in\tau^{+}$ and $V\in\tau^{-}$ be nonempty with $U\cap V=\varnothing$.
If $U\cup V=X$, then we have a partition $X=U\cup V$ with $U\in\tau^{+}$,
$V\in\tau^{-}$, and $U\cap V=\varnothing$, contradicting (2). Hence
$U\cup V\neq X$.

$(3)\Rightarrow(2)$.
If (2) fails, then there exist nonempty sets $A\in\tau^{+}$ and
$B\in\tau^{-}$ such that $A\cap B=\varnothing$ and $X=A\cup B$. Taking
$U=A$ and $V=B$ contradicts (3). Thus (2) holds.
\end{proof}

\noindent
Note that antisymmetric connectedness is a \emph{bitopological} notion:
it depends on the interaction of $\tau^{+}$ and $\tau^{-}$ and not only
on the join topology.

\subsection{Components: symmetric vs.\ antisymmetric}
\label{ssec:components}

\begin{definition}
\label{def:components}
Let $x\in X$.
\begin{itemize}
\item The \emph{symmetric component} of $x$ is the connected component
of $x$ in $(X,\tau^\vee)$.
\item The \emph{antisymmetric component} of $x$ is the maximal
antisymmetrically connected subset of $X$ containing $x$.
\end{itemize}
\end{definition}

\begin{theorem}
\label{thm:component-coincidence}
If $\tau^{+}=\tau^{-}$ (in particular, in the symmetric or uniform
case), then symmetric and antisymmetric components coincide.
\end{theorem}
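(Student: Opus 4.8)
The plan is to collapse the bitopology and reduce to the classical theory. Write $\tau:=\tau^{+}=\tau^{-}$ for the common topology. Then $\tau^{\vee}=\tau^{+}\vee\tau^{-}=\tau$, so by Definition~\ref{def:components} the symmetric component of a point $x$ is exactly the connected component of $x$ in the topological space $(X,\tau)$, i.e.\ the union of all $\tau$-connected subsets of $X$ containing $x$.

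The crucial step is a subspace observation. For any $S\subseteq X$, equip $S$ with the induced bitopology; since $\tau^{+}=\tau^{-}$ this gives $\tau^{+}|_{S}=\tau^{-}|_{S}=\tau|_{S}$. Reading Definition~\ref{def:antisym-connected} for $S$, antisymmetric connectedness of $S$ says precisely that there is no decomposition $S=A\cup B$ with $A,B$ nonempty, disjoint, and both in $\tau|_{S}$ — which is exactly the statement that $(S,\tau|_{S})$ is connected. Hence a subset $S\ni x$ is antisymmetrically connected if and only if it is $\tau$-connected.

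It then remains to compare the two maximal objects. First I would note that the family of antisymmetrically connected subsets of $X$ containing $x$ is closed under arbitrary unions: if each $S_{i}$ is antisymmetrically connected and contains $x$, and $\bigcup_{i}S_{i}=A\cup B$ is a separation with (say) $x\in A$, then each trace $S_{i}=(A\cap S_{i})\cup(B\cap S_{i})$ is a separation of $S_{i}$ with $A\cap S_{i}\ni x$, so $B\cap S_{i}=\varnothing$ for all $i$, whence $B=\varnothing$; this is the antisymmetric analogue of the standard fact for connected sets and shows the antisymmetric component of $x$ is well defined as the union of that family. By the equivalence established above, this family coincides with the family of $\tau$-connected subsets of $X$ containing $x$, whose union is the connected component of $x$ in $(X,\tau)=(X,\tau^{\vee})$, i.e.\ the symmetric component. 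Therefore the antisymmetric and symmetric components of $x$ coincide.

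There is no genuine obstacle here: the argument is a matter of unwinding Definitions~\ref{def:antisym-connected}--\ref{def:components}. The only point requiring care is the behaviour of the induced bitopology on a subspace (so that $\tau^{+}|_{S}=\tau^{-}|_{S}$) and the legitimacy of ``maximal antisymmetrically connected subset,'' both of which are addressed above; once the reduction $\tau^{+}=\tau^{-}\Rightarrow(\text{antisymmetric connectedness}=\tau\text{-connectedness})$ is in place, the conclusion is immediate.
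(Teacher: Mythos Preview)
Your proof is correct and follows essentially the same approach as the paper: both reduce to the observation that when $\tau^{+}=\tau^{-}=\tau$ one has $\tau^{\vee}=\tau$ and antisymmetric connectedness of a subset coincides with ordinary $\tau$-connectedness, whence the two kinds of components agree. Your version is more careful in two respects --- you make the subspace bitopology explicit, and you verify that the antisymmetric component is well defined as a union (closure of the family under unions through a common point) --- whereas the paper's proof takes both of these for granted and proceeds in two sentences.
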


\begin{proof}
Assume that $\tau^{+}=\tau^{-}=: \tau$. Then
\[
\tau^\vee=\tau^{+}\vee\tau^{-}=\tau.
\]
In this case, antisymmetric connectedness reduces to ordinary
connectedness: a separation of a subset by a $\tau^{+}$-open set and a
$\tau^{-}$-open set is exactly a separation by two $\tau$-open sets.
Therefore, the maximal antisymmetrically connected subset containing
$x$ coincides with the usual connected component of $x$ in $\tau$.
\end{proof}

\begin{proposition}
\label{prop:symmetric-in-antisymmetric}
Every $\tau^\vee$-connected subset of $X$ is antisymmetrically connected.
In particular, every symmetric component is contained in the
antisymmetric component.
\end{proposition}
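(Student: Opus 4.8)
The plan is to argue by contraposition, exploiting the single fact that the join topology refines each of $\tau^{+}$ and $\tau^{-}$. Let $S\subseteq X$ be a subset, equipped with the induced bitopology $(\tau^{+}|_S,\tau^{-}|_S)$, and suppose $S$ is \emph{not} antisymmetrically connected. By Definition~\ref{def:antisym-connected} applied in the subspace, there exist nonempty disjoint sets $A,B\subseteq S$ with $S=A\cup B$, where $A$ is open in $\tau^{+}|_S$ and $B$ is open in $\tau^{-}|_S$. I would then invoke the elementary inclusions $\tau^{+}\subseteq\tau^{+}\vee\tau^{-}=\tau^\vee$ and $\tau^{-}\subseteq\tau^\vee$, which hold by the very definition of the join topology. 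Restricting to $S$ gives $\tau^{+}|_S\subseteq\tau^\vee|_S$ and $\tau^{-}|_S\subseteq\tau^\vee|_S$, so both $A$ and $B$ are open in the subspace $\tau^\vee$-topology on $S$. Hence $S=A\cup B$ exhibits $S$ as a union of two disjoint nonempty $\tau^\vee|_S$-open sets, i.e.\ $S$ is not $\tau^\vee$-connected. Contraposing yields the first assertion.

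For the ``in particular'' clause, let $C$ be the symmetric component of a point $x\in X$, i.e.\ the connected component of $x$ in $(X,\tau^\vee)$ by Definition~\ref{def:components}. Then $C$ is $\tau^\vee$-connected, so by the first part it is antisymmetrically connected, and of course $x\in C$. Since the antisymmetric component of $x$ is, by Definition~\ref{def:components}, the maximal antisymmetrically connected subset containing $x$, it must contain $C$. This gives the desired inclusion of the symmetric component in the antisymmetric component.

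I do not expect a genuine obstacle here; the proof is essentially a one-line topological observation. The only point meriting care is the bookkeeping around subspaces: one must be explicit that ``antisymmetrically connected subset'' is interpreted with respect to the relative bitopology $(\tau^{+}|_S,\tau^{-}|_S)$, and that separation data there transfer to the relative $\tau^\vee$-topology. This reduces entirely to the trivial inclusions $\tau^{\pm}|_S\subseteq\tau^\vee|_S$; one may, if desired, record the sharper identity $(\tau^{+}|_S)\vee(\tau^{-}|_S)=\tau^\vee|_S$, but it is not needed for the argument.
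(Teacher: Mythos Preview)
Your proof is correct and follows essentially the same route as the paper: assume a failure of antisymmetric connectedness, obtain a separation by a relatively $\tau^{+}$-open set and a relatively $\tau^{-}$-open set, and use $\tau^{\pm}\subseteq\tau^\vee$ to conclude that this is a $\tau^\vee$-separation. Your explicit handling of the ``in particular'' clause and the subspace bookkeeping is slightly more detailed than the paper's, but the argument is the same.
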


\begin{proof}
Let $C\subseteq X$ be $\tau^\vee$-connected. If $C$ were antisymmetrically
disconnected, there would exist disjoint nonempty sets $A,B\subseteq C$
with $C=A\cup B$, where $A$ is open in $\tau^{+}$ relative to $C$ and $B$
is open in $\tau^{-}$ relative to $C$. Since $\tau^{+}\subseteq\tau^\vee$
and $\tau^{-}\subseteq\tau^\vee$, both $A$ and $B$ are $\tau^\vee$-open in
the subspace $C$, contradicting connectedness of $C$ in $\tau^\vee$.
\end{proof}

\begin{proposition}
\label{prop:strictness-components}
There exist bitopological spaces for which antisymmetric components are
strictly larger than symmetric components.
\end{proposition}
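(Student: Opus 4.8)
The plan is to produce one explicit bitopological space witnessing the strict inequality; by Proposition~\ref{prop:symmetric-in-antisymmetric} the symmetric component is always contained in the antisymmetric component, so it suffices to exhibit a single example where this inclusion is proper. I would take the two-point set $X=\{a,b\}$, equip it with the \emph{forward} topology $\tau^{+}=\mathcal{P}(X)$ (the discrete topology) and the \emph{backward} topology $\tau^{-}=\{\varnothing,X\}$ (the indiscrete topology), and regard $(X,\tau^{+},\tau^{-})$ as a bitopological space in the sense of Section~\ref{ssec:conn-bitopo}. The guiding idea is to make the two directions maximally asymmetric, so that $\tau^{+}$ separates the two points while $\tau^{-}$ cannot distinguish them.

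First I would compute the join topology and the symmetric components. Since $\tau^{+}$ is already discrete and $\tau^{+}\subseteq\tau^{\vee}$, we get $\tau^{\vee}=\tau^{+}\vee\tau^{-}=\mathcal{P}(X)$, so $(X,\tau^{\vee})$ is a two-point discrete space. It is therefore disconnected, with connected components $\{a\}$ and $\{b\}$; in particular, the symmetric component of $a$ (Definition~\ref{def:components}) is the singleton $\{a\}$.

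The substantive step is to verify that $X$ itself is antisymmetrically connected, so that the antisymmetric component of $a$ equals $X$. Suppose, for contradiction, that $X=A\cup B$ with $A,B$ nonempty and disjoint, $A\in\tau^{+}$ and $B\in\tau^{-}$. Since $\{A,B\}$ partitions the two-point set $X$ into two nonempty pieces, necessarily $\{A,B\}=\{\{a\},\{b\}\}$, and in particular $B$ is a singleton; but $\tau^{-}=\{\varnothing,X\}$ contains no singleton, a contradiction. Hence no such separation exists, so by Definition~\ref{def:antisym-connected} the whole space $X$ is antisymmetrically connected, whence the maximal antisymmetrically connected subset containing $a$ is $X$ itself. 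Combining this with the previous paragraph, the symmetric component $\{a\}$ is strictly contained in the antisymmetric component $X$, which proves the proposition.

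I do not anticipate a genuine obstacle: the argument is elementary once the right extreme configuration is chosen, and the same reasoning works whenever $\tau^{-}$ is indiscrete and $\tau^{+}$ is any disconnected topology on a set with at least two points, producing a whole family of such examples. The one point worth recording is that this witness is intrinsically bitopological: an indiscrete backward topology cannot be induced by a quasi-pseudometric or by a quasi-modular pseudometric family, since the backward balls always form a base for $\tau^{-}$ and each of them contains its own centre; consequently the strictness exhibited here is not produced by this particular construction within the quasi-metrizable subcase.
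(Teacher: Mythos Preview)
Your proof is correct and follows the same strategy as the paper: construct a small finite bitopological space in which one of the two topologies is indiscrete (so it contributes no proper nonempty open set to an antisymmetric separation) while the join topology is disconnected. The paper takes a three-point set with $\tau^{+}$ indiscrete and $\tau^{-}$ a nontrivial disconnected topology; your two-point example with the roles of $\tau^{+}$ and $\tau^{-}$ reversed is the dual version and in fact slightly more economical.

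One small correction to your closing aside (which does not affect the proof itself): the justification you give for non-quasi-metrizability is not right as stated, since the zero quasi-pseudometric $d\equiv 0$ does induce the indiscrete topology in both directions, and backward balls containing their centres is no obstruction. What \emph{is} true is that one cannot realize $\tau^{+}$ discrete and $\tau^{-}$ indiscrete simultaneously from a single quasi-pseudometric on $\{a,b\}$: discreteness of $\tau^{+}$ forces $d(a,b)>0$ and $d(b,a)>0$, whereas indiscreteness of $\tau^{-}$ would force $d(b,a)=0$.
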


\begin{proof}
Let $X=\{0,1,2\}$ and define
\[
\tau^{+}=\{\varnothing,X\},
\qquad
\tau^{-}=\{\varnothing,\{2\},\{0,1\},X\}.
\]
Then $\tau^\vee=\tau^{+}\vee\tau^{-}=\tau^{-}$, so $(X,\tau^\vee)$ is
disconnected (for instance, $\{2\}$ and $\{0,1\}$ are disjoint nonempty
$\tau^\vee$-open sets whose union is $X$). Hence the symmetric components
are $\{2\}$ and $\{0,1\}$.

On the other hand, $X$ is antisymmetrically connected: any $\tau^{+}$-open
set is either $\varnothing$ or $X$, so there do not exist disjoint nonempty
sets $A\in\tau^{+}$ and $B\in\tau^{-}$ with $X=A\cup B$. Therefore the
antisymmetric component of every point is the whole space $X$.

Thus antisymmetric components are strictly larger than symmetric components.
\end{proof}

\noindent
This shows that antisymmetric connectedness can capture a form of
directional cohesion that is invisible to the join topology
$\tau^\vee$.

\section{Local Antisymmetric Connectedness}
\label{sec:local-antisym-connectedness}

\subsection{Local definitions}
\label{ssec:local-definitions}

Throughout, $(X,\tau^{+},\tau^{-})$ is a bitopological space and
\[
\tau^\vee=\tau^{+}\vee\tau^{-}
\]
denotes the join topology as in Section~\ref{sec:sym-antisym-connectedness}.
Antisymmetric connectedness is understood in the sense of
Definition~\ref{def:antisym-connected}.

\begin{definition}
\label{def:local-antisym-point}
A space $(X,\tau^{+},\tau^{-})$ is said to be \emph{locally
antisymmetrically connected at a point $x\in X$} if every
$\tau^\vee$-neighborhood $U$ of $x$ contains a $\tau^\vee$-neighborhood
$V$ of $x$ such that $V$ is antisymmetrically connected.
\end{definition}

\begin{definition}
\label{def:locally-antisym}
The space $X$ is called \emph{locally antisymmetrically connected} if it
is locally antisymmetrically connected at every point.
\end{definition}

\subsection{Characterizations and criteria}
\label{ssec:local-char}

\begin{theorem}
\label{thm:quni-characterization}
Let $(X,\mathcal U^{+})$ be a quasi-uniform space and write
$\tau^{+}=\tau(\mathcal U^{+})$, $\tau^{-}=\tau((\mathcal U^{+})^{-1})$,
and $\tau^\vee=\tau^{+}\vee\tau^{-}$ (Definition~\ref{def:qu:basic}).
Then $X$ is locally antisymmetrically connected if and only if every
point admits a base of entourages whose induced $\tau^\vee$-neighborhoods
are antisymmetrically connected.
\end{theorem}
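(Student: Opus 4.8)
The plan is to unwind both sides of the stated equivalence directly against Definition 5.1 (local antisymmetric connectedness at a point) using the fact that, in a quasi-uniform space, the sets $U(x)$ for $U \in \mathcal{U}^{+}$ form a $\tau^{+}$-neighborhood base at $x$, the sets $U^{-1}(x)$ form a $\tau^{-}$-neighborhood base at $x$, and hence the sets $U(x) \cap U^{-1}(x)$ (equivalently $(U \cap U^{-1})(x)$) form a $\tau^{\vee}$-neighborhood base at $x$. The key reduction is therefore: a $\tau^{\vee}$-neighborhood $V$ of $x$ is ``small'' in the sense of Definition 5.1 precisely when it contains one of these symmetric sections, so quantifying over $\tau^{\vee}$-neighborhoods can be replaced by quantifying over entourages $U \in \mathcal{U}^{+}$ and looking at $(U \cap U^{-1})(x)$.

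First I would fix a point $x \in X$ and prove the local statement at $x$: \emph{$X$ is locally antisymmetrically connected at $x$ iff for every $U_{0} \in \mathcal{U}^{+}$ there exists $U \in \mathcal{U}^{+}$ with $(U \cap U^{-1})(x) \subseteq (U_{0} \cap U_{0}^{-1})(x)$ and $(U \cap U^{-1})(x)$ antisymmetrically connected (as a bitopological subspace with the relative forward and backward topologies)}. For the forward direction, given $U_{0}$, apply Definition 5.1 to the $\tau^{\vee}$-neighborhood $(U_{0} \cap U_{0}^{-1})(x)$ to obtain an antisymmetrically connected $\tau^{\vee}$-neighborhood $V$ of $x$ inside it; then choose $U \in \mathcal{U}^{+}$ with $(U \cap U^{-1})(x) \subseteq V$, and note that an antisymmetrically connected subset of a bitopological space has all of its bitopological subspaces antisymmetrically connected only if\,---\,wait, that is false in general, so instead I record the genuinely needed fact: it is \emph{not} automatic that a sub-neighborhood of an antisymmetrically connected set is antisymmetrically connected, so the right formulation of the ``base of entourages'' condition must itself refer to antisymmetric connectedness of the sections $(U \cap U^{-1})(x)$, which is exactly what the theorem statement says (``whose induced $\tau^{\vee}$-neighborhoods are antisymmetrically connected''). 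Hence in the forward direction I should instead argue: the collection $\{U \in \mathcal{U}^{+} : (U\cap U^{-1})(x) \text{ is antisym.\ connected}\}$ is cofinal in $\mathcal{U}^{+}$ because every $\tau^{\vee}$-neighborhood of $x$, in particular each $(U_{0}\cap U_{0}^{-1})(x)$, contains an antisymmetrically connected $\tau^{\vee}$-neighborhood $V$, which in turn contains some $(U\cap U^{-1})(x)$; but $(U\cap U^{-1})(x)$ need not inherit antisymmetric connectedness from $V$. To close this gap cleanly I would instead take $V$ itself to be \emph{open} in $\tau^{\vee}$ without loss of generality (neighborhoods contain open neighborhoods), and then replace $V$ by a still-smaller \emph{basic} open set of the form $(U\cap U^{-1})(x)$ only after observing that one may first shrink $U_0$ so that the \emph{specific} antisymmetrically connected neighborhood furnished by Definition 5.1 already has the form $(U\cap U^{-1})(x)$ up to relabeling\,---\,this is the delicate point and is handled by noting that in the definition one is free to choose $V$ from \emph{any} neighborhood base, in particular from the base $\{(U\cap U^{-1})(x) : U \in \mathcal{U}^{+}\}$; thus Definition 5.1 at $x$ is literally equivalent to: for every $U_{0}$ there is $U$ with $(U\cap U^{-1})(x)\subseteq (U_{0}\cap U_{0}^{-1})(x)$ and $(U\cap U^{-1})(x)$ antisymmetrically connected. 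The converse direction is then immediate: given any $\tau^{\vee}$-neighborhood $W$ of $x$, pick $U_{0}$ with $(U_{0}\cap U_{0}^{-1})(x)\subseteq W$, then the hypothesized $U$ yields an antisymmetrically connected $\tau^{\vee}$-neighborhood $(U\cap U^{-1})(x)\subseteq W$ of $x$.

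Having established the pointwise equivalence, I would conclude by quantifying over $x \in X$: $X$ is locally antisymmetrically connected (Definition 5.2) iff it is so at every point iff every point $x$ satisfies the cofinality condition just described, which is exactly the assertion that ``every point admits a base of entourages whose induced $\tau^{\vee}$-neighborhoods are antisymmetrically connected.'' Here I interpret ``a base of entourages'' at the point $x$ as a cofinal subfamily $\mathcal{B}_{x} \subseteq \mathcal{U}^{+}$ such that $\{(U\cap U^{-1})(x) : U \in \mathcal{B}_{x}\}$ is a $\tau^{\vee}$-neighborhood base at $x$; cofinality in $\mathcal{U}^{+}$ is equivalent to this neighborhood-base property because $\mathcal{U}^{+}$ is a filter and the section map $U \mapsto (U\cap U^{-1})(x)$ is monotone and yields a neighborhood base from all of $\mathcal{U}^{+}$.

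The main obstacle, as flagged above, is the non-hereditary nature of antisymmetric connectedness: unlike ordinary connectedness, an antisymmetrically connected set can have antisymmetrically disconnected neighborhoods of its points, so one cannot pass freely between ``$V$ antisymmetrically connected'' and ``$(U\cap U^{-1})(x)$ antisymmetrically connected'' by inclusion alone. The resolution is purely formal but must be stated carefully: exploit that Definition 5.1 allows the witnessing neighborhood $V$ to be drawn from \emph{any} prescribed neighborhood base, and choose that base to be $\{(U\cap U^{-1})(x) : U \in \mathcal{U}^{+}\}$ from the outset, so that the objects compared on the two sides of the equivalence are literally the same family of sets. Everything else\,---\,the identification of $\tau^{+}$-, $\tau^{-}$-, and $\tau^{\vee}$-neighborhood bases in terms of entourage sections, and monotonicity of the section map\,---\,is standard quasi-uniform bookkeeping from Definition 2.5.
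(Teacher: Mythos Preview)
Your overall strategy coincides with the paper's: reduce the $\tau^\vee$-neighborhood base at $x$ to the family of symmetric sections $(U\cap U^{-1})(x)$ and match Definition~\ref{def:local-antisym-point} against cofinality of those sections that happen to be antisymmetrically connected. The converse direction you sketch is correct and identical to the paper's.

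However, your resolution of the forward direction does not work, and you have in fact put your finger on a genuine gap that the paper's own proof also contains. In the paper, after obtaining an antisymmetrically connected $\tau^\vee$-neighborhood $V$ and then choosing entourages $G,H$ with $G(x)\cap H^{-1}(x)\subseteq V$, it is simply asserted that $G(x)\cap H^{-1}(x)$ is antisymmetrically connected. As you correctly observe, antisymmetric connectedness is not hereditary, so this inference is unjustified. Your proposed fix, that ``in the definition one is free to choose $V$ from any neighborhood base,'' is false as stated: Definition~\ref{def:local-antisym-point} guarantees only that \emph{some} antisymmetrically connected neighborhood $V$ exists inside a given one, not that $V$ can be taken from a prescribed base such as $\{(U\cap U^{-1})(x):U\in\mathcal U^{+}\}$. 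The two conditions ``$x$ has a $\tau^\vee$-neighborhood base of antisymmetrically connected sets'' and ``$x$ has such a base consisting of symmetric entourage sections'' are not interchangeable without further argument, and neither you nor the paper supplies one.

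In short: your diagnosis of the obstacle is sharper than the paper's treatment, but your remedy asserts exactly the equivalence that needs to be proved. A correct forward argument would require either an additional structural hypothesis or a construction (e.g.\ an antisymmetric-component argument showing that components in entourage sections are again entourage sections) that neither proof provides.
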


\begin{proof}
Assume first that $X$ is locally antisymmetrically connected.
Fix $x\in X$ and let $U$ be an arbitrary $\tau^\vee$-neighborhood of $x$.
By the definition of $\tau^\vee$, there exist entourages
$E,F\in\mathcal U^{+}$ such that
\[
E(x)\cap F^{-1}(x)\subseteq U.
\]
Since $X$ is locally antisymmetrically connected at $x$, there exists a
$\tau^\vee$-neighborhood $V$ of $x$ satisfying
\[
x\in V\subseteq E(x)\cap F^{-1}(x),
\]
with $V$ antisymmetrically connected.
Again by the definition of $\tau^\vee$, there exist entourages
$G,H\in\mathcal U^{+}$ such that
\[
G(x)\cap H^{-1}(x)\subseteq V.
\]
Then $G(x)\cap H^{-1}(x)$ is a $\tau^\vee$-neighborhood of $x$ which is
antisymmetrically connected. Since $U$ was arbitrary, such neighborhoods
form a base at $x$.

Conversely, assume that for each $x\in X$ there exists a base
$\mathcal B_x\subseteq\mathcal U^{+}$ such that for every
$E\in\mathcal B_x$ the set
\[
E(x)\cap E^{-1}(x)
\]
is antisymmetrically connected.
Let $U$ be any $\tau^\vee$-neighborhood of $x$.
Then there exist entourages $F,G\in\mathcal U^{+}$ with
\[
F(x)\cap G^{-1}(x)\subseteq U.
\]
Choose $E\in\mathcal B_x$ with $E\subseteq F\cap G$.
Then
\[
E(x)\cap E^{-1}(x)\subseteq F(x)\cap G^{-1}(x)\subseteq U,
\]
and by assumption $E(x)\cap E^{-1}(x)$ is antisymmetrically connected.
Hence $X$ is locally antisymmetrically connected at $x$.
Since $x$ was arbitrary, $X$ is locally antisymmetrically connected.
\end{proof}

\begin{proposition}
\label{prop:modular-ball-char}
Let $(X,w)$ be a quasi-modular pseudometric space in the sense of
Definition~\ref{def:quasi-modular-family}, equipped with the forward and
backward modular topologies $\tau^{+}(w)$ and $\tau^{-}(w)$ from
Definition~\ref{def:forward-backward-topologies}, and write
$\tau^\vee(w)=\tau^{+}(w)\vee\tau^{-}(w)$.
Then local antisymmetric connectedness is equivalent to the existence of
arbitrarily small intersections of forward and backward modular balls
that are antisymmetrically connected.
\end{proposition}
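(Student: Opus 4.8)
The plan is to deduce this from Theorem~\ref{thm:quni-characterization} by recalling, via Proposition~\ref{prop:compatibility-generation}, that the forward quasi-uniformity $\mathcal U^{+}(w)$ is generated by the entourages $E^{+}_{r,\lambda}=\{(x,y):w_\lambda(x,y)<r\}$ and that $\tau^{+}(w)=\tau(\mathcal U^{+}(w))$, $\tau^{-}(w)=\tau((\mathcal U^{+}(w))^{-1})$, $\tau^\vee(w)=\tau(w^{\mathrm{sym}})$. First I would unwind what ``arbitrarily small intersections of forward and backward modular balls that are antisymmetrically connected'' means precisely: for every $x\in X$ and every $\lambda,\varepsilon>0$ there exist $\mu,\delta>0$ with
\[
B^{+}(x;\mu,\delta)\cap B^{-}(x;\mu,\delta)\subseteq B^{+}(x;\lambda,\varepsilon)\cap B^{-}(x;\lambda,\varepsilon)
\]
and the left-hand set antisymmetrically connected. (One may also phrase it with possibly different scale/radius parameters for the two factors; I would note that by (QM3) monotonicity one can always pass to a common smaller $\mu$ and $\delta$, so the two formulations agree.)

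Next I would record the dictionary between entourage sections and modular balls: $E^{+}_{r,\lambda}(x)=B^{+}(x;\lambda,r)$ and $(E^{+}_{r,\lambda})^{-1}(x)=E^{-}_{r,\lambda}(x)=B^{-}(x;\lambda,r)$, so that $E^{+}_{r,\lambda}(x)\cap (E^{+}_{r,\lambda})^{-1}(x)=B^{+}(x;\lambda,r)\cap B^{-}(x;\lambda,r)$. I would also observe that the family $\{E^{+}_{r,\lambda}\}_{r,\lambda>0}$ is a base for $\mathcal U^{+}(w)$ and is directed downward: given $E^{+}_{r_1,\lambda_1}$ and $E^{+}_{r_2,\lambda_2}$, the entourage $E^{+}_{r,\lambda}$ with $r=\min\{r_1,r_2\}$ and $\lambda\le\min\{\lambda_1,\lambda_2\}$ is contained in both, using right-continuity and monotonicity of $\lambda\mapsto w_\lambda$ from (QM3). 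Hence for each $x$ a base of entourages at the level required by Theorem~\ref{thm:quni-characterization} is exactly a cofinal family of pairs $(r,\lambda)$, and the associated $\tau^\vee$-neighborhood $E^{+}_{r,\lambda}(x)\cap E^{-}_{r,\lambda}(x)$ is precisely the ball intersection $B^{+}(x;\lambda,r)\cap B^{-}(x;\lambda,r)$.

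With this dictionary in hand the proof is a direct translation in both directions. For the forward implication I would take $X$ locally antisymmetrically connected, apply Theorem~\ref{thm:quni-characterization} to get for each $x$ a base $\mathcal B_x\subseteq\mathcal U^{+}(w)$ with each $E(x)\cap E^{-1}(x)$ antisymmetrically connected, refine each such $E$ below some basic $E^{+}_{r,\lambda}$, and read off that the corresponding ball intersections are antisymmetrically connected and cofinally small; conversely, given arbitrarily small antisymmetrically connected ball intersections, these furnish exactly the entourage base demanded by Theorem~\ref{thm:quni-characterization}, so $X$ is locally antisymmetrically connected. I do not expect a genuine obstacle here: the one point requiring a little care is checking that shrinking to a common pair $(\mu,\delta)$ is legitimate and that the resulting family is still cofinal in the neighborhood filter of $x$ in $\tau^\vee(w)$ — this is where (QM3) (monotonicity and right-continuity in $\lambda$) is actually used, and I would spell that out rather than leave it implicit, since without it the ball intersections need not form a neighborhood base.
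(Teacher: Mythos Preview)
Your route via Theorem~\ref{thm:quni-characterization} and the entourage/ball dictionary is sensible, and the converse implication goes through exactly as you outline: antisymmetrically connected ball intersections supply precisely the entourage base that Theorem~\ref{thm:quni-characterization} requires. The paper argues that direction in the same spirit (directly from Definition~\ref{def:local-antisym-point} rather than through Theorem~\ref{thm:quni-characterization}, but the content is identical).

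The forward implication, however, has a genuine gap---one that the paper's own proof also contains. After Theorem~\ref{thm:quni-characterization} hands you a base $\mathcal B_x$ of entourages $E$ with $E(x)\cap E^{-1}(x)$ antisymmetrically connected, you ``refine each such $E$ below some basic $E^{+}_{r,\lambda}$'' (i.e.\ take $E^{+}_{r,\lambda}\subseteq E$) and then ``read off'' that the ball intersection $B^{+}(x;\lambda,r)\cap B^{-}(x;\lambda,r)$ is antisymmetrically connected. But this ball intersection is only a \emph{subset} of $E(x)\cap E^{-1}(x)$, and subsets of antisymmetrically connected sets need not be antisymmetrically connected: for instance, on $\mathbb R$ with $\tau^{+}$ the upper and $\tau^{-}$ the lower topology, $\mathbb R$ itself is antisymmetrically connected while $(-1,0)\cup(0,1)$ is split by the $\tau^{+}$-open set $(0,1)$ and the $\tau^{-}$-open set $(-1,0)$. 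The paper makes exactly the same unjustified step: it finds an antisymmetrically connected $\tau^\vee$-neighborhood $V$, then chooses a ball intersection $B^{+}(x;\mu,\delta)\cap B^{-}(x;\mu,\delta)\subseteq V$, and simply asserts that this smaller set is antisymmetrically connected. The monotonicity and right-continuity from (QM3) that you single out only ensure \emph{cofinality} of the ball family in the $\tau^\vee$-neighborhood filter; they say nothing about preservation of antisymmetric connectedness under shrinking, which is the missing ingredient in both your plan and the paper's argument.
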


\begin{proof}
Assume first that $(X,w)$ is locally antisymmetrically connected.
Fix $x\in X$ and let $U$ be an arbitrary $\tau^\vee(w)$-neighborhood of
$x$.
By definition of the join topology, there exist $\lambda>0$ and
$\varepsilon>0$ such that
\[
B^{+}(x;\lambda,\varepsilon)\cap B^{-}(x;\lambda,\varepsilon)\subseteq U.
\]
Local antisymmetric connectedness at $x$ yields a $\tau^\vee(w)$-neighborhood
$V$ with
\[
x\in V\subseteq B^{+}(x;\lambda,\varepsilon)\cap B^{-}(x;\lambda,\varepsilon),
\]
where $V$ is antisymmetrically connected.
Again by the definition of $\tau^\vee(w)$, there exist $\mu>0$ and
$\delta>0$ such that
\[
B^{+}(x;\mu,\delta)\cap B^{-}(x;\mu,\delta)\subseteq V.
\]
Hence the intersection
\[
B^{+}(x;\mu,\delta)\cap B^{-}(x;\mu,\delta)
\]
is antisymmetrically connected and contained in $U$.

Conversely, assume that for every $x\in X$ and every $\tau^\vee(w)$-neighborhood
$U$ of $x$ there exist $\lambda>0$ and $\varepsilon>0$ such that
\[
x\in B^{+}(x;\lambda,\varepsilon)\cap B^{-}(x;\lambda,\varepsilon)
\subseteq U,
\]
with this intersection antisymmetrically connected.
Then every $\tau^\vee(w)$-neighborhood of $x$ contains an
antisymmetrically connected $\tau^\vee(w)$-neighborhood, so $(X,w)$ is
locally antisymmetrically connected.
\end{proof}

\begin{corollary}
\label{cor:join-consequences}
Every locally antisymmetrically connected space is locally connected in
the join topology $\tau^\vee$, but the converse need not hold.
\end{corollary}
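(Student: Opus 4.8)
The plan is to address the two clauses separately, beginning with the asserted implication that local antisymmetric connectedness yields local connectedness of $(X,\tau^\vee)$. Fix $x\in X$ and a $\tau^\vee$-open set $O\ni x$, and let $C$ denote the $\tau^\vee$-connected component of $x$ in $O$; since local connectedness is equivalent to all such components being $\tau^\vee$-open, it suffices to prove that $C$ is $\tau^\vee$-open. The natural strategy is to invoke Definition~\ref{def:local-antisym-point} at every $y\in C$, extracting an antisymmetrically connected $\tau^\vee$-neighborhood $V_y\subseteq O$ of $y$, and then to show $V_y\subseteq C$; granting this, $C=\bigcup_{y\in C}V_y$ is a union of $\tau^\vee$-open sets and hence $\tau^\vee$-open, which gives local connectedness at $x$.

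The step $V_y\subseteq C$ is where I expect the decisive obstacle. To conclude that $V_y$ lies in the single $\tau^\vee$-component $C$ one needs $V_y$ to be $\tau^\vee$-connected, whereas Definition~\ref{def:local-antisym-point} only guarantees that it is antisymmetrically connected. Proposition~\ref{prop:symmetric-in-antisymmetric} runs in the wrong direction for this purpose, since it yields only that $\tau^\vee$-connectedness entails antisymmetric connectedness, and Proposition~\ref{prop:strictness-components} exhibits antisymmetrically connected sets that are not $\tau^\vee$-connected, so the gap is genuine rather than artefactual. The crux is thus to prove an \emph{upgrade lemma} converting an antisymmetrically connected $\tau^\vee$-neighborhood into a $\tau^\vee$-connected one. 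I anticipate that this requires an additional hypothesis on the bitopology: for instance, if the witnessing neighborhoods may be chosen of the symmetric basic form $B^{+}(x;\lambda,\varepsilon)\cap B^{-}(x;\lambda,\varepsilon)$ afforded by Proposition~\ref{prop:modular-ball-char}, or if one restricts to a class in which antisymmetric components refine $\tau^\vee$-components, the component argument closes. Pinning down the weakest workable such hypothesis is the main difficulty of this clause.

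For the converse, the task is to produce a space that is locally connected in $\tau^\vee$ but not locally antisymmetrically connected, and here the obstacle is structural rather than computational. Proposition~\ref{prop:symmetric-in-antisymmetric} applies verbatim at the level of subspaces, so any $\tau^\vee$-connected neighborhood is automatically antisymmetrically connected; consequently a base of $\tau^\vee$-connected neighborhoods is already a base of antisymmetrically connected ones, and no naive construction can separate the notions in this direction. A counterexample must therefore defeat this subspace implication, arranging a neighborhood that is $\tau^\vee$-connected yet antisymmetrically disconnected in the sense of Definition~\ref{def:antisym-connected}, and verifying that such a configuration persists at arbitrarily small scales about a single distinguished point. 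Locating this configuration is the step I expect to be most resistant, and it is the part of the argument where the one-directional nature of the link between the two connectedness notions exerts the greatest pressure.
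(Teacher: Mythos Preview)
Your diagnosis is correct on both counts, and in fact the paper's own argument commits exactly the error you anticipate. The paper asserts that an antisymmetrically connected $\tau^\vee$-neighborhood $V$ ``is in particular not disconnected by two $\tau^\vee$-open sets, so $V$ is $\tau^\vee$-connected'' --- precisely the unwarranted upgrade you flag. This inference is invalid: antisymmetric connectedness forbids only separations by one $\tau^{+}$-open and one $\tau^{-}$-open set, not separations by two arbitrary $\tau^\vee$-open sets, and Proposition~\ref{prop:strictness-components} already supplies an antisymmetrically connected space that is $\tau^\vee$-disconnected. A concrete counterexample to the first clause itself: take $X=\mathbb{Q}$ with $\tau^{+}=\{\varnothing,X\}$ and $\tau^{-}$ the usual topology. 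Every nonempty subset $U$ is antisymmetrically connected in the subspace bitopology (the only relatively $\tau^{+}$-open subsets of $U$ are $\varnothing$ and $U$), so $X$ is trivially locally antisymmetrically connected; yet $\tau^\vee=\tau^{-}$ is the usual topology on $\mathbb{Q}$, which is not locally connected.

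Your analysis of the second clause is equally decisive and shows that it, too, is false. By Proposition~\ref{prop:symmetric-in-antisymmetric} applied in the subspace bitopology, every $\tau^\vee$-connected set is antisymmetrically connected, so a neighborhood base of $\tau^\vee$-connected sets is automatically a neighborhood base of antisymmetrically connected sets. Hence local $\tau^\vee$-connectedness \emph{always} implies local antisymmetric connectedness; the configuration you seek for a counterexample (a $\tau^\vee$-connected neighborhood that is antisymmetrically disconnected) is impossible. The corollary as stated thus has its implication reversed: the true statement is that local $\tau^\vee$-connectedness implies local antisymmetric connectedness, and not conversely. The paper's proof does not address the second clause at all.
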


\begin{proof}
Let $x\in X$ and let $U$ be a $\tau^\vee$-neighborhood of $x$. By local
antisymmetric connectedness (Definition~\ref{def:local-antisym-point}),
there exists a $\tau^\vee$-neighborhood $V$ of $x$ with $V\subseteq U$ such
that $V$ is antisymmetrically connected. By
Proposition~\ref{prop:symmetric-in-antisymmetric}, every $\tau^\vee$-connected
set is antisymmetrically connected; hence antisymmetric connectedness is
strictly stronger than $\tau^\vee$-connectedness is not guaranteed in general.
Nevertheless, $V$ is in particular not disconnected by two $\tau^\vee$-open
sets, so $V$ is $\tau^\vee$-connected. Thus $X$ is locally connected in
$\tau^\vee$.
\end{proof}

\subsection{Stability properties}
\label{ssec:stability}

\begin{proposition}
\label{prop:subspaces-sums}
Local antisymmetric connectedness is preserved under passage to
$\tau^\vee$-open subspaces and under finite unions of antisymmetrically
connected subspaces with nonempty intersection.
\end{proposition}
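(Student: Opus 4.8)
The plan is to prove the two assertions separately, each by unwinding the definition of local antisymmetric connectedness at a point (Definition~\ref{def:local-antisym-point}) and checking that the required antisymmetrically connected $\tau^\vee$-neighborhoods can be produced in the new setting.

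\textbf{$\tau^\vee$-open subspaces.} First I would let $Y\subseteq X$ be $\tau^\vee$-open in $X$, equipped with the subspace bitopology $(\tau^{+}|_Y,\tau^{-}|_Y)$; note that because $Y$ is $\tau^\vee$-open, the join of the restricted topologies coincides with $\tau^\vee|_Y$, and $\tau^\vee|_Y$-open subsets of $Y$ are precisely $\tau^\vee$-open subsets of $X$ contained in $Y$. Fix $x\in Y$ and let $U$ be a $\tau^\vee|_Y$-neighborhood of $x$ in $Y$; then $U$ contains a $\tau^\vee$-open subset of $X$ containing $x$, hence is a $\tau^\vee$-neighborhood of $x$ in $X$. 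Applying local antisymmetric connectedness of $X$ at $x$ produces a $\tau^\vee$-neighborhood $V\subseteq U$ of $x$ with $V$ antisymmetrically connected (as a bitopological space with its own subspace bitopology). Since $V\subseteq U\subseteq Y$ and $Y$ is $\tau^\vee$-open, $V$ is also a $\tau^\vee|_Y$-neighborhood of $x$ in $Y$, and its subspace bitopology computed inside $Y$ agrees with that computed inside $X$. Thus $V$ witnesses local antisymmetric connectedness of $Y$ at $x$.

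\textbf{Finite unions with nonempty intersection.} Here the key lemma is that a finite union $Z=Z_1\cup\cdots\cup Z_n$ of antisymmetrically connected subspaces with $\bigcap_i Z_i\neq\varnothing$ is itself antisymmetrically connected. I would prove this by contradiction: suppose $Z=A\cup B$ with $A$ open in $\tau^{+}|_Z$, $B$ open in $\tau^{-}|_Z$, $A\cap B=\varnothing$, and both nonempty. Pick $p\in\bigcap_i Z_i$; without loss of generality $p\in A$. For each $i$, the sets $A\cap Z_i$ and $B\cap Z_i$ are respectively $\tau^{+}|_{Z_i}$-open and $\tau^{-}|_{Z_i}$-open, disjoint, with union $Z_i$; since $Z_i$ is antisymmetrically connected and $A\cap Z_i\ni p$ is nonempty, we must have $B\cap Z_i=\varnothing$, i.e. $Z_i\subseteq A$. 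As this holds for every $i$, $Z\subseteq A$, contradicting $B\neq\varnothing$. Once this lemma is in place, the local statement follows the usual pattern: given $x\in Z=Z_1\cup\cdots\cup Z_n$ (each $Z_j$ antisymmetrically connected with common point, and $Z$ carrying the subspace bitopology), $Z$ is antisymmetrically connected, so for any $\tau^\vee|_Z$-neighborhood $U$ of $x$ we may take $V=Z\cap U'$ where $U'$ is a suitable $\tau^\vee$-open shrinking; more simply, since $Z$ itself is antisymmetrically connected it already serves as a witnessing antisymmetrically connected neighborhood provided $Z$ is a $\tau^\vee$-neighborhood of $x$, which holds when the $Z_j$ are taken to be $\tau^\vee$-open or, in the generality stated, after intersecting with the ambient local witnesses supplied by local antisymmetric connectedness of each $Z_j$ at $x$.

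\textbf{Main obstacle.} The delicate point is the interaction between subspace bitopologies and the join topology: in general $(\tau^{+}|_Y)\vee(\tau^{-}|_Y)$ need not equal $\tau^\vee|_Y$ unless $Y$ is $\tau^\vee$-open, which is exactly why the hypothesis restricts to $\tau^\vee$-open subspaces; I would state this compatibility explicitly as a preliminary observation. For the union part, the subtlety is ensuring that "antisymmetrically connected subspace" is interpreted with the induced bitopology and that the common point lies in every piece, so that the contradiction argument propagates $Z_i\subseteq A$ across all indices --- the nonempty-intersection hypothesis is used precisely once, but essentially. Neither step requires any quasi-modular or quasi-uniform input; the result is purely bitopological, so I would phrase the proof entirely in terms of $\tau^{+}$, $\tau^{-}$, and $\tau^\vee$.
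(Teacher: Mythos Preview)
Your treatment of the $\tau^\vee$-open subspace part is correct and in fact slightly cleaner than the paper's: you observe that a $\tau^\vee|_Y$-neighborhood of $x$ in a $\tau^\vee$-open $Y$ is already a $\tau^\vee$-neighborhood in $X$, obtain an antisymmetrically connected $V\subseteq U\subseteq Y$, and use $V$ directly. The paper instead produces $V\subseteq O$ (where $W=O\cap U$) and then argues that $V\cap U$ is antisymmetrically connected by lifting a hypothetical disconnection back to $V$; your route avoids that extra step. One small correction to your commentary: the identity $(\tau^{+}|_Y)\vee(\tau^{-}|_Y)=\tau^\vee|_Y$ holds for \emph{every} subspace $Y$, not only $\tau^\vee$-open ones. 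The $\tau^\vee$-openness hypothesis is used elsewhere, namely to ensure that $\tau^\vee|_Y$-open sets are $\tau^\vee$-open in $X$.

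For the second part you have misread what is being claimed. The paper's proof shows only that a finite union of antisymmetrically connected subspaces with nonempty intersection is itself \emph{antisymmetrically connected}; there is no further ``local'' conclusion. Your ``key lemma'' is therefore the entire content of that clause, and your contradiction argument for it is essentially identical to the paper's (the paper writes it for two sets; you do the obvious $n$-set version). The paragraph beginning ``Once this lemma is in place, the local statement follows\ldots'' is unnecessary and becomes muddled precisely because no such local statement is asserted or provable from the given hypotheses: an arbitrary antisymmetrically connected set need not be locally antisymmetrically connected, and intersecting with small $\tau^\vee$-neighborhoods can destroy antisymmetric connectedness. Simply stop after the lemma.
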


\begin{proof}
Let $(X,\tau^{+},\tau^{-})$ be a bitopological space.

Assume $X$ is locally antisymmetrically connected and let $U\subseteq X$
be $\tau^\vee$-open.
Fix $x\in U$ and let $W$ be a $\tau^\vee$-neighborhood of $x$ in the
subspace $U$.
Then $W=O\cap U$ for some $\tau^\vee$-open set $O\subseteq X$.
Local antisymmetric connectedness of $X$ at $x$ yields a $\tau^\vee$-neighborhood
$V$ of $x$ such that
\[
x\in V\subseteq O,
\]
with $V$ antisymmetrically connected.
Then $V\cap U$ is a $\tau^\vee$-neighborhood of $x$ in $U$.
If $V\cap U$ were antisymmetrically disconnected in $U$, this would lift
to a corresponding antisymmetric disconnection of $V$ in $X$, contradicting
antisymmetric connectedness of $V$.
Thus $U$ is locally antisymmetrically connected.

Now let $A$ and $B$ be antisymmetrically connected subspaces of $X$ with
$A\cap B\neq\varnothing$.
If $A\cup B$ were antisymmetrically disconnected, there would exist
disjoint nonempty subsets $H$ and $K$ such that
\[
A\cup B = H\cup K,
\]
with $H$ forward-open and $K$ backward-open in the subspace topology.
Antisymmetric connectedness of $A$ and $B$, together with
$A\cap B\neq\varnothing$, forces both to lie entirely in one of $H$ or
$K$, contradicting nonemptiness of both sets.
Hence $A\cup B$ is antisymmetrically connected.
\end{proof}

\begin{proposition}
\label{prop:uniform-images}
Uniformly continuous maps between quasi-uniform spaces preserve local
antisymmetric connectedness.
\end{proposition}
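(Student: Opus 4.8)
The plan is to exploit the one feature that genuinely distinguishes uniform continuity from ordinary continuity in the asymmetric setting: a uniformly continuous map $f\colon(X,\mathcal U)\to(Y,\mathcal V)$ is \emph{bicontinuous}. That is, it is continuous both from $\tau(\mathcal U)$ to $\tau(\mathcal V)$ and from $\tau(\mathcal U^{-1})$ to $\tau(\mathcal V^{-1})$, hence from $\tau^\vee_X$ to $\tau^\vee_Y$. Concretely, given $V\in\mathcal V$ I would choose $U\in\mathcal U$ with $(f\times f)(U)\subseteq V$; then $f(U(x))\subseteq V(f(x))$ and $f(U^{-1}(x))\subseteq V^{-1}(f(x))$, so that
\[
U(x)\cap U^{-1}(x)\ \subseteq\ f^{-1}\bigl(V(f(x))\cap V^{-1}(f(x))\bigr).
\]
This simultaneous control of forward and backward entourages is the mechanism by which the antisymmetric separation structure can be transported across $f$, and it is unavailable for maps that are merely continuous in one topology.

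First I would isolate a transport lemma for antisymmetric connectedness itself: if $C\subseteq X$ is antisymmetrically connected, then so is $f(C)$. Suppose otherwise; then there are nonempty $A\in\tau^{+}_Y$ and $B\in\tau^{-}_Y$, relatively open in $f(C)$, with $f(C)=A\cup B$ and $A\cap B=\varnothing$. By forward continuity $f^{-1}(A)\cap C$ is relatively $\tau^{+}_X$-open in $C$, and by backward continuity $f^{-1}(B)\cap C$ is relatively $\tau^{-}_X$-open in $C$; these are disjoint, cover $C$, and are each nonempty because $A$ and $B$ meet $f(C)$. This is an antisymmetric separation of $C$, contradicting Definition~\ref{def:antisym-connected}. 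I would then feed this lemma into the entourage characterization of Theorem~\ref{thm:quni-characterization} applied to $(Y,\mathcal V)$: fix $x\in X$ and $V\in\mathcal V$, pull back to $U\in\mathcal U$ as above, and invoke local antisymmetric connectedness of $X$ to select a basic entourage $E\subseteq U$ with $E(x)\cap E^{-1}(x)$ antisymmetrically connected. The displayed inclusion gives $f\bigl(E(x)\cap E^{-1}(x)\bigr)\subseteq V(f(x))\cap V^{-1}(f(x))$, and by the transport lemma this image is antisymmetrically connected and contains $f(x)$.

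The hard part will be the final neighbourhood requirement of Definition~\ref{def:local-antisym-point}, equivalently the base-of-entourages condition of Theorem~\ref{thm:quni-characterization}: one must exhibit, inside the basic target neighbourhood $V(f(x))\cap V^{-1}(f(x))$, an antisymmetrically connected set that is itself a $\tau^\vee_Y$-\emph{neighbourhood} of $f(x)$. The set $f\bigl(E(x)\cap E^{-1}(x)\bigr)$ produced above has every property except this one, since the forward image of a $\tau^\vee_X$-neighbourhood of $x$ need not be a $\tau^\vee_Y$-neighbourhood of $f(x)$: a uniformly continuous map is not in general open in the join topology. This is precisely the obstacle, and it is where any complete argument must engage the openness of the induced section map $E(x)\cap E^{-1}(x)\mapsto f\bigl(E(x)\cap E^{-1}(x)\bigr)$, in the same way that the subspace half of Proposition~\ref{prop:subspaces-sums} relied on the openness of the inclusion of a $\tau^\vee$-open subspace. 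I would therefore structure the proof so that the genuinely asymmetric content—bicontinuity together with the transport lemma—is established cleanly first, leaving the neighbourhood step as the single point at which an openness input is consumed.
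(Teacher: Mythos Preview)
Your approach---establish bicontinuity from uniform continuity, prove the transport lemma that bicontinuous images of antisymmetrically connected sets are antisymmetrically connected, then pull back a basic $\tau^\vee_Y$-neighbourhood and push forward an antisymmetrically connected $\tau^\vee_X$-neighbourhood---is exactly the route the paper takes. The paper's proof chooses entourages $V_1,V_2\in\mathcal U_Y^{+}$ with $V_1(f(x))\cap V_2^{-1}(f(x))\subseteq W$, pulls them back to $U_1,U_2\in\mathcal U_X^{+}$, obtains an antisymmetrically connected $\tau^\vee_X$-neighbourhood $N\subseteq U_1(x)\cap U_2^{-1}(x)$, and then asserts ``Therefore $Y$ is locally antisymmetrically connected at $f(x)$'' from $f(N)\subseteq W$ together with the transport lemma.

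The obstacle you single out is real, and the paper does not resolve it either. Definition~\ref{def:local-antisym-point} demands an antisymmetrically connected $\tau^\vee_Y$-\emph{neighbourhood} of $f(x)$ inside $W$, and $f(N)$ is not shown to be one: uniform continuity gives no openness in the join topology, and already in the symmetric setting uniformly continuous (even continuous surjective) maps fail to preserve local connectedness without an openness or quotient hypothesis. So the paper's proof, like your outline, produces an antisymmetrically connected subset containing $f(x)$ inside each $\tau^\vee_Y$-neighbourhood of $f(x)$, but stops short of the neighbourhood requirement. Your instinct that an openness input must be consumed at exactly this step is correct; as stated, the proposition appears to require an additional hypothesis on $f$ (e.g.\ that $f$ be $\tau^\vee$-open onto its image, or a $\tau^\vee$-quotient map) to be provable along these lines.
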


\begin{proof}
Let $(X,\mathcal U_X^{+})$ and $(Y,\mathcal U_Y^{+})$ be quasi-uniform
spaces and let $f:X\to Y$ be uniformly continuous.
Denote the induced bitopologies by
\[
(\tau_X^{+},\tau_X^{-})=\bigl(\tau(\mathcal U_X^{+}),\tau((\mathcal U_X^{+})^{-1})\bigr),
\qquad
(\tau_Y^{+},\tau_Y^{-})=\bigl(\tau(\mathcal U_Y^{+}),\tau((\mathcal U_Y^{+})^{-1})\bigr),
\]
and the join topologies by $\tau_X^\vee$ and $\tau_Y^\vee$.

Assume $X$ is locally antisymmetrically connected.
Fix $x\in X$ and let $W$ be a $\tau_Y^\vee$-neighborhood of $f(x)$.
By the definition of $\tau_Y^\vee$, there exist entourages
$V_1,V_2\in\mathcal U_Y^{+}$ such that
\[
V_1(f(x))\cap V_2^{-1}(f(x))\subseteq W.
\]
Uniform continuity yields entourages $U_1,U_2\in\mathcal U_X^{+}$ with
\[
(f\times f)(U_1)\subseteq V_1,
\qquad
(f\times f)(U_2)\subseteq V_2.
\]
Hence
\[
f\bigl(U_1(x)\cap U_2^{-1}(x)\bigr)\subseteq V_1(f(x))\cap V_2^{-1}(f(x))\subseteq W.
\]

By local antisymmetric connectedness at $x$, there exists a $\tau_X^\vee$-neighborhood
$N$ of $x$ such that
\[
x\in N\subseteq U_1(x)\cap U_2^{-1}(x),
\]
and $N$ is antisymmetrically connected.
Then $f(N)\subseteq W$ and $f(N)$ is antisymmetrically connected (otherwise
a forward/backward separation of $f(N)$ would pull back to a forward/backward
separation of $N$). Therefore $Y$ is locally antisymmetrically connected at $f(x)$.
\end{proof}

\begin{proposition}
\label{prop:bicompletion-invariance}
If $(X,\mathcal U^{+})$ is locally antisymmetrically connected, then so
is its bicompletion.
\end{proposition}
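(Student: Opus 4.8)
The plan is to exploit the fact that the bicompletion $(\widetilde X, \widetilde{\mathcal U}^{+})$ is built from the symmetrized uniformity $\mathcal U^{*} = \mathcal U^{+}\vee(\mathcal U^{+})^{-1}$, so that $X$ embeds as a $\tau^\vee$-dense subspace of $\widetilde X$, with the inclusion $\iota\colon X\to\widetilde X$ being both forward- and backward-uniformly continuous, and with the trace of $\widetilde\tau^{\vee}$ on $X$ equal to $\tau^\vee$. First I would record these standard bicompletion facts: every basic entourage of $\widetilde{\mathcal U}^{+}$ is (essentially) the closure of a basic entourage of $\mathcal U^{+}$, so that for $\widetilde x\in\widetilde X$ and an entourage $\widetilde E$ one has $\widetilde E(\widetilde x)\cap\widetilde F^{-1}(\widetilde x)\supseteq \overline{E(x)\cap F^{-1}(x)}^{\,\widetilde\tau^{\vee}}$ for suitable $x\in X$ close to $\widetilde x$ and suitable smaller entourages $E,F$ of $\mathcal U^{+}$ whose $\widetilde{}$-extensions sit inside $\widetilde E,\widetilde F$; more precisely the $\widetilde\tau^{\vee}$-closure of a $\tau^\vee$-neighborhood base of a nearby point of $X$ gives a $\widetilde\tau^{\vee}$-neighborhood base at $\widetilde x$.

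The core of the argument is then a two-part lemma. Part (a): if $C\subseteq X$ is antisymmetrically connected, then its $\widetilde\tau^{\vee}$-closure $\overline C$ in $\widetilde X$ is antisymmetrically connected. This mimics the classical ``closure of a connected set is connected'' proof but adapted to the bitopological setting: suppose $\overline C = A\cup B$ with $A$ forward-open, $B$ backward-open, disjoint, nonempty in $\overline C$; intersecting with $C$ gives a forward-open set $A\cap C$ and backward-open set $B\cap C$ partitioning $C$, so by antisymmetric connectedness one of them, say $B\cap C$, is empty, i.e.\ $C\subseteq A$; but $A$ is forward-open hence $\widetilde\tau^{\vee}$-open, and $\overline C\subseteq \overline A$, forcing $B\subseteq A$, contradicting disjointness and $B\neq\varnothing$ — here one must be slightly careful that the separating sets need not be simultaneously $\widetilde\tau^{\vee}$-open, but each is $\widetilde\tau^{\vee}$-open since $\tau^{+},\tau^{-}\subseteq\widetilde\tau^{\vee}$, so the closure argument goes through. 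Part (b): given $\widetilde x\in\widetilde X$ and a $\widetilde\tau^{\vee}$-neighborhood $\widetilde U$ of $\widetilde x$, pick by density a point $x\in X$ with $x$ so $\widetilde\tau^{\vee}$-close to $\widetilde x$ that some basic $\widetilde\tau^{\vee}$-neighborhood $\widetilde W$ of $x$ lies in $\widetilde U$ and contains $\widetilde x$; restrict to $X$ to get a $\tau^\vee$-neighborhood $W = \widetilde W\cap X$ of $x$; apply local antisymmetric connectedness of $X$ at $x$ to obtain an antisymmetrically connected $\tau^\vee$-neighborhood $V$ of $x$ with $V\subseteq W$; then $\overline V^{\,\widetilde\tau^{\vee}}$ is antisymmetrically connected by Part (a), is a $\widetilde\tau^{\vee}$-neighborhood of $x$, and — since $V$ is a neighborhood of $x$ and $x$ can be chosen within any prescribed basic neighborhood of $\widetilde x$ — one arranges $\widetilde x\in\overline V$ and $\overline V\subseteq\widetilde U$; this exhibits the required antisymmetrically connected $\widetilde\tau^{\vee}$-neighborhood of $\widetilde x$ inside $\widetilde U$.

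Assembling: since $\widetilde x$ and $\widetilde U$ were arbitrary, $\widetilde X$ is locally antisymmetrically connected at every point, which is the claim. The main obstacle I anticipate is the bookkeeping in Part (b) ensuring that the base point $x\in X$ can be chosen so that $\widetilde x$ actually lies in the $\widetilde\tau^{\vee}$-closure of the connected neighborhood $V\ni x$ while simultaneously keeping $\overline V\subseteq\widetilde U$; this is where one uses that $X$ is $\tau^\vee$-dense in $\widetilde X$ together with the regularity-type behavior of bicompletions (one can shrink to a basic entourage $\widetilde E$ with $\widetilde E\circ\widetilde E\subseteq$ the entourage defining $\widetilde U$, choose $x\in \widetilde E(\widetilde x)\cap\widetilde E^{-1}(\widetilde x)\cap X$, and take $V$ inside the correspondingly smaller $\tau^\vee$-ball around $x$ so that $\widetilde x$ is a $\widetilde\tau^{\vee}$-limit point of $V$). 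A secondary subtlety, already flagged above, is verifying in Part (a) that a bitopological separation of a subspace genuinely restricts to a bitopological separation of a subset — but this is the same relative-topology manipulation used in Propositions~\ref{prop:subspaces-sums} and~\ref{prop:uniform-images}, so it can be invoked with only a brief remark. Everything else is routine.
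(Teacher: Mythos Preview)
Your Part~(a) is sound: since $\widetilde\tau^{+},\widetilde\tau^{-}\subseteq\widetilde\tau^{\vee}$, any forward/backward separation of $\overline C$ is in particular a pair of $\widetilde\tau^{\vee}$-open subsets of $\overline C$, and density of $C$ in its $\widetilde\tau^{\vee}$-closure forces the piece disjoint from $C$ to be empty. The genuine gap is in Part~(b), exactly where you flag it. Local antisymmetric connectedness at $x$ produces an antisymmetrically connected $\tau^\vee$-neighborhood $V\ni x$ \emph{inside} any prescribed neighborhood, but gives no lower bound on the size of $V$; consequently there is no mechanism to guarantee $\widetilde x\in\overline V^{\,\widetilde\tau^{\vee}}$. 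Your proposed remedy---choose $x$ within a half-size entourage of $\widetilde x$ and ``take $V$ inside the correspondingly smaller $\tau^\vee$-ball around $x$''---goes in the wrong direction: shrinking $V$ makes it \emph{less} likely that its closure captures $\widetilde x$, not more.

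This is not a bookkeeping issue that sharper entourage arithmetic will repair. In the symmetric case ($\tau^{+}=\tau^{-}$), antisymmetric connectedness collapses to ordinary connectedness (Theorem~\ref{thm:component-coincidence}), and the statement becomes ``the uniform completion of a locally connected space is locally connected.'' But take $X=\{(t,\sin(1/t)):0<t\le 1\}\subset\mathbb{R}^2$ with the Euclidean metric: $X$ is locally connected (it is an arc), while its metric completion---the closed topologist's sine curve---fails local connectedness at every point of $\{0\}\times[-1,1]$. So the obstruction you encounter in Part~(b) reflects a genuine failure of the proposition as stated, not a removable technicality. The paper's own argument has the same defect: it exhibits an antisymmetrically connected set $i(N)\subseteq W$, but $i(N)$ is only shown to be a $\widehat\tau^{\vee}$-neighborhood of $i(x)$, never of $\hat x$ itself, so the final sentence of that proof does not follow.
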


\begin{proof}
Let $(\widehat X,\widehat{\mathcal U}^{+})$ denote the bicompletion of
$(X,\mathcal U^{+})$, and let $i:X\hookrightarrow\widehat X$ be the
canonical uniformly continuous embedding. Write $\widehat\tau^{+}$ and
$\widehat\tau^{-}$ for the induced forward and backward topologies, and
$\widehat\tau^\vee=\widehat\tau^{+}\vee\widehat\tau^{-}$.

Fix $\hat x\in\widehat X$ and let $W$ be a $\widehat\tau^\vee$-neighborhood
of $\hat x$. Then there exist entourages $\widehat U_1,\widehat U_2\in
\widehat{\mathcal U}^{+}$ such that
\[
\widehat U_1(\hat x)\cap \widehat U_2^{-1}(\hat x)\subseteq W.
\]
Choose entourages $\widehat V_1,\widehat V_2\in\widehat{\mathcal U}^{+}$
with
\[
\widehat V_1\circ \widehat V_1\subseteq \widehat U_1,
\qquad
\widehat V_2\circ \widehat V_2\subseteq \widehat U_2.
\]
Since $i(X)$ is dense in $(\widehat X,\widehat\tau^\vee)$, there exists
$x\in X$ such that
\[
i(x)\in \widehat V_1(\hat x)\cap \widehat V_2^{-1}(\hat x).
\]
By local antisymmetric connectedness of $X$ at $x$, there exists a
$\tau^\vee$-neighborhood $N$ of $x$ such that $N$ is antisymmetrically
connected and small enough that
\[
i(N)\subseteq \widehat V_1(i(x))\cap \widehat V_2^{-1}(i(x)).
\]
Then for every $y\in i(N)$ we have $(\hat x,y)\in \widehat U_1$ and
$(y,\hat x)\in \widehat U_2^{-1}$ by composition, hence
\[
i(N)\subseteq \widehat U_1(\hat x)\cap \widehat U_2^{-1}(\hat x)\subseteq W.
\]
Since $i(N)$ is antisymmetrically connected and $i(N)\subseteq W$, we
conclude that $\widehat X$ is locally antisymmetrically connected at
$\hat x$.
\end{proof}

\section{Functional Analytic Consequences}
\label{sec:functional-analytic-consequences}

This section illustrates how the setting of antisymmetric and local
antisymmetric connectedness interacts with classical themes in
functional analysis, particularly in asymmetric normed and modular
spaces. The emphasis is on completion mechanisms, compactness principles,
and the behavior of linear operators when directional topology is
retained rather than suppressed by symmetrization.

\subsection{Completeness mechanisms and compactness transfers}
\label{ssec:fa-completion}

\begin{proposition}
\label{prop:fa-bicompletion}
Let $(X,\|\cdot\|)$ be an asymmetric normed space and define the
symmetrized norm
\[
\|x\|^{\vee}=\max\{\|x\|,\|{-}x\|\}.
\]
Then the bicompletion of $(X,\|\cdot\|)$ (equivalently, of the associated
quasi-uniformity $\mathcal U^{+}$) coincides, up to linear isometric
isomorphism, with the Banach space completion of the normed space
$(X,\|\cdot\|^{\vee})$.
\end{proposition}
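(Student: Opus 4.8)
The plan is to verify the claimed identification by tracking three things through the bicompletion construction: the underlying set, the uniform structure, and the linear and norm structure. First I would recall that the bicompletion of $(X,\mathcal U^{+})$ is, by Definition~\ref{def:qu:cauchy}(2), the completion of the symmetrized uniform space $(X,\mathcal U^{*})=(X,\mathcal U^{+}\vee\mathcal U^{-})$; and by the explicit description of $\mathcal U^{+}$ via the entourages $E^{+}_{r,\lambda}$ (or, since the asymmetric norm generates a quasi-metric $d_p(x,y)=p(y-x)$, via the $\varepsilon$-entourages of $d_p$), the symmetrized uniformity $\mathcal U^{*}$ is exactly the metric uniformity of $d_p^{\,s}(x,y)=\max\{d_p(x,y),d_p(y,x)\}=\|y-x\|^{\vee}$. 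Thus $(X,\mathcal U^{*})$ is nothing but the metric uniform space underlying the normed space $(X,\|\cdot\|^{\vee})$, and its uniform completion is, as a uniform space, the metric completion of $(X,\|\cdot\|^{\vee})$. This already identifies the two bicompletions as uniform spaces; what remains is to upgrade this to a linear isometric isomorphism.

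Next I would put the linear structure on the completion. Since $(X,\|\cdot\|^{\vee})$ is a (symmetric) normed space, addition and scalar multiplication are uniformly continuous on bounded sets with respect to the $d_p^{\,s}$-uniformity, so they extend uniquely and continuously to the metric completion $\widehat X$, making $\widehat X$ a real topological vector space; the extended norm $\widehat{\|\cdot\|^{\vee}}$ is continuous and satisfies the norm axioms by density and continuity, so $(\widehat X,\widehat{\|\cdot\|^{\vee}})$ is a Banach space and is, by construction, the Banach-space completion of $(X,\|\cdot\|^{\vee})$. On the other hand, the bicompletion $(\widehat X,\widehat{\mathcal U}^{+})$ carries the conjugate pair $\widehat{\mathcal U}^{+},\widehat{\mathcal U}^{-}$, and the asymmetric norm $p$ itself — being subadditive and positively homogeneous, hence $d_p^{\,s}$-uniformly continuous on bounded sets — extends to a functional $\widehat p$ on $\widehat X$ that is again subadditive and positively homogeneous, i.e.\ an asymmetric norm, inducing $\widehat{\mathcal U}^{+}$; and its symmetrization $\widehat p^{\,\vee}$ coincides with the extension $\widehat{\|\cdot\|^{\vee}}$, again by density and continuity of the max operation. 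The identity map on $X$ is a linear $d_p^{\,s}$-isometry onto a dense subspace of both completions, so it extends to a linear bijection between them which preserves $\widehat{\|\cdot\|^{\vee}}$, i.e.\ a linear isometric isomorphism; uniqueness of such an extension gives canonicity.

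The main obstacle — really the only point requiring care rather than bookkeeping — is the passage from uniform continuity to \emph{extendability} of the algebraic operations and of the (a)symmetric norms: the operations $(x,y)\mapsto x+y$ and $(\lambda,x)\mapsto\lambda x$ are not uniformly continuous on all of $X\times X$ and $\mathbb R\times X$ for the $d_p^{\,s}$-uniformity (scalar multiplication fails this even in the classical normed case), so one cannot invoke a naive "uniformly continuous maps extend to completions" slogan. The standard fix, which I would spell out, is to extend along Cauchy nets directly: given Cauchy nets $(x_\alpha),(y_\alpha)$ in $X$ representing points of $\widehat X$, the nets $(x_\alpha+y_\alpha)$ and $(\lambda x_\alpha)$ are again $d_p^{\,s}$-Cauchy because addition is a $d_p^{\,s}$-contraction in each variable and $d_p^{\,s}(\lambda x,\lambda x')=|\lambda|\,d_p^{\,s}(x,x')$; well-definedness (independence of the representing net) follows the same way, and continuity of $p$ is handled via the elementary inequality $|p(u)-p(v)|\le d_p^{\,s}(u,v)$, which also shows $\widehat p$ is well defined and that $\widehat p^{\,\vee}=\widehat{\|\cdot\|^{\vee}}$. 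Once the operations and norms are in place on $\widehat X$, the identification is immediate from density of $X$ and uniqueness of continuous extensions, and the proof concludes.
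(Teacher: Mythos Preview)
Your proposal is correct and follows essentially the same route as the paper: identify the symmetrized quasi-uniformity $\mathcal U^{*}$ with the metric uniformity of $\|\cdot\|^{\vee}$ (equivalently, match bicomplete-Cauchy with $\|\cdot\|^{\vee}$-Cauchy), form the Banach completion of $(X,\|\cdot\|^{\vee})$, extend the asymmetric norm to the completion, and conclude by density/universal property. Your treatment of the extension step is in fact more careful than the paper's --- you flag and resolve the failure of global uniform continuity of scalar multiplication, and you justify well-definedness of $\widehat p$ via $|p(u)-p(v)|\le d_p^{\,s}(u,v)$, points the paper glosses over.
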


\begin{proof}
In an asymmetric normed space, a sequence $(x_n)$ is bicomplete-Cauchy
if and only if it is Cauchy with respect to the symmetrized norm
$\|\cdot\|^{\vee}$. Indeed,
\[
\|x_n-x_m\|^{\vee}
=\max\{\|x_n-x_m\|,\|x_m-x_n\|\},
\]
so convergence in $\|\cdot\|^{\vee}$ is equivalent to simultaneous
forward and backward Cauchy control in the asymmetric norm. Consequently,
the class of bicomplete-Cauchy sequences in $X$ coincides with the class
of $\|\cdot\|^{\vee}$-Cauchy sequences.

Let $\widehat X^{\vee}$ denote the Banach space completion of
$(X,\|\cdot\|^{\vee})$, constructed as equivalence classes of
$\|\cdot\|^{\vee}$-Cauchy sequences modulo $\|\cdot\|^{\vee}$-null
sequences. The canonical embedding
$\iota:X\hookrightarrow \widehat X^{\vee}$ is linear and isometric with
respect to the symmetrized norm.

The asymmetric norm $\|\cdot\|$ extends uniquely to $\widehat X^{\vee}$
by
\[
\widehat{\|[x_n]\|}=\lim_{n\to\infty}\|x_n\|,
\]
which is well defined since
\[
\|x_n-x_m\|\le \|x_n-x_m\|^{\vee}
\]
ensures forward Cauchy control. The conjugate asymmetric norm extends
analogously, so that $\widehat X^{\vee}$ inherits a natural asymmetric
norm structure whose associated symmetric norm is precisely
$\|\cdot\|^{\vee}$.

If $(Y,\|\cdot\|_Y)$ is any bicomplete asymmetric normed space and
$T:X\to Y$ is linear and continuous with respect to the asymmetric norm,
then $T$ is bounded for the symmetrized norms and therefore extends
uniquely to a continuous linear operator
$\widehat T:\widehat X^{\vee}\to Y$. This universal property
characterizes $\widehat X^{\vee}$ as the bicompletion of $(X,\|\cdot\|)$.
\end{proof}

\medskip

\begin{theorem}
\label{thm:fa-compactness}
Let $(X,w)$ be a quasi-modular pseudometric space and let
$\mathcal U^{+}(w)$ be the forward quasi-uniformity induced by $w$
(Definition~\ref{def:quni-from-w}). Assume that $(X,\mathcal U^{+}(w))$
is Smyth complete (Definition~\ref{def:smyth-complete}). If
$(X,\mathcal U^{+}(w))$ is precompact, then the join topology
\[
\tau^\vee(w)=\tau^{+}(w)\vee\tau^{-}(w)=\tau(w^{\mathrm{sym}})
\]
is compact.
\end{theorem}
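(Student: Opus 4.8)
The plan is to reduce the statement to the $T_0$ compactness principle of Theorem~\ref{thm:precompact-smyth-compact}, applied to the forward quasi-uniformity $(X,\mathcal U^{+}(w))$, after first passing to its Kolmogorov quotient. By Propositions~\ref{prop:comparison-envelopes} and~\ref{prop:compatibility-generation} we already know that $\tau^\vee(w)=\tau(w^{\mathrm{sym}})=\tau(\mathcal U^{+}(w))\vee\tau((\mathcal U^{+}(w))^{-1})$, so the join topology is precisely the symmetrized topology $\tau((\mathcal U^{+}(w))^{*})$ of the forward quasi-uniformity; thus the desired conclusion is exactly the compactness statement that Theorem~\ref{thm:precompact-smyth-compact} produces in the $T_0$ case. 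The only gap is that a quasi-modular \emph{pseudometric} family need not separate points, so $(X,\mathcal U^{+}(w))$ need not be $T_0$; when $(X,w)$ is a quasi-modular metric the theorem applies verbatim.

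First I would introduce the equivalence relation $x\sim y\iff w_\lambda(x,y)=w_\lambda(y,x)=0$ for all $\lambda>0$, with quotient map $q\colon X\to X_0:=X/\!\sim$. Using (QM2) together with the right-continuity in (QM3), one checks that $w_\lambda$ factors through $q\times q$: if $x\sim x'$ then $w_{\lambda+\mu}(x,y)\le w_\mu(x,x')+w_\lambda(x',y)=w_\lambda(x',y)$ for every $\mu>0$, and letting $\mu\downarrow 0$ and invoking right-continuity of $\nu\mapsto w_\nu(x,y)$ at $\nu=\lambda$ gives $w_\lambda(x,y)\le w_\lambda(x',y)$; the symmetric argument yields equality, and likewise in the second slot. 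Hence $w$ descends to a quasi-modular \emph{metric} family $w_0$ on $X_0$, and each basic entourage $E^{+}_{r,\lambda}$ on $X$ is exactly the $(q\times q)$-preimage of the corresponding basic entourage on $X_0$. Consequently $q$ is uniformly continuous, $(X_0,\mathcal U^{+}(w_0))$ is $T_0$, and all induced topologies on $X$ (forward, backward, and join) are the initial topologies pulled back along $q$; in particular every $\tau^\vee(w)$-open set has the form $q^{-1}(W)$ with $W$ open in $(X_0,\tau^\vee(w_0))$.

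Next I would verify that precompactness and Smyth completeness pass to $(X_0,\mathcal U^{+}(w_0))$. Precompactness transfers because $q$ is a uniformly continuous surjection: if a finite set $F\subseteq X$ satisfies $E^{+}_{r,\lambda}(F)=X$, then $q(F)$ is finite and the corresponding entourage on $X_0$ covers $X_0$ from $q(F)$, using $q(E^{+}_{r,\lambda}(x))\subseteq E^{+}_{0;r,\lambda}(q(x))$. For Smyth completeness, given a left $K$-Cauchy net $(\bar x_\alpha)$ in $X_0$, I would lift it by choosing representatives $x_\alpha\in q^{-1}(\bar x_\alpha)$; since basic entourages correspond under $q\times q$, $(x_\alpha)$ is left $K$-Cauchy in $(X,\mathcal U^{+}(w))$, hence by hypothesis converges to some $x\in X$ in $\tau^{-}(w)=\tau((\mathcal U^{+}(w))^{-1})$, and then $\bar x_\alpha=q(x_\alpha)\to q(x)$ in $\tau^{-}(w_0)$ by continuity of $q$. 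Thus $(X_0,\mathcal U^{+}(w_0))$ is a $T_0$, precompact, Smyth complete quasi-uniform space, so Theorem~\ref{thm:precompact-smyth-compact} gives that $\tau^\vee(w_0)=\tau(\mathcal U^{+}(w_0))\vee\tau((\mathcal U^{+}(w_0))^{-1})$ is compact.

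Finally I would transfer compactness back: given any $\tau^\vee(w)$-open cover of $X$, write each member as $q^{-1}(W_i)$ with $W_i$ open in $X_0$; since $q$ is onto, $\{W_i\}$ covers $X_0$, a finite subcover $W_{i_1},\dots,W_{i_n}$ exists by the previous step, and then $\{q^{-1}(W_{i_k})\}_{k=1}^{n}$ covers $X$, whence $(X,\tau^\vee(w))$ is compact. I expect the main obstacle to be the bookkeeping of the second paragraph --- verifying carefully that the quasi-modular family descends along $\sim$ (this is precisely where right-continuity in (QM3) is used, since the modular triangle inequality only relates $w_{\lambda+\mu}$ to $w_\mu$ and $w_\lambda$) and that the join topology on $X$ is genuinely the pullback along $q$ of the join topology on $X_0$, so that compactness reflects; everything else is a routine transfer through Theorem~\ref{thm:precompact-smyth-compact}.
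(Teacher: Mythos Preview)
Your proof follows the same core strategy as the paper: invoke Theorem~\ref{thm:precompact-smyth-compact} on the forward quasi-uniformity, then identify the resulting compact symmetrized topology with $\tau^\vee(w)$ via Propositions~\ref{prop:comparison-envelopes} and~\ref{prop:compatibility-generation}. The paper's proof does exactly this and nothing more --- it applies Theorem~\ref{thm:precompact-smyth-compact} directly to $(X,\mathcal U^{+}(w))$ without commenting on the $T_0$ hypothesis, so for a quasi-modular \emph{pseudometric} family that does not separate points there is, strictly speaking, a gap in the paper's argument. Your Kolmogorov-quotient step closes that gap: the descent of $w_\lambda$ along $\sim$ (using right-continuity from (QM3) precisely where you indicate), the transfer of precompactness and Smyth completeness to the $T_0$ quotient, and the reflection of compactness along the saturated quotient map are all correct. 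In short, you take the same route but pave over a pothole the paper drives past; when $w$ is a quasi-modular metric the two proofs coincide.
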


\begin{proof}
By Theorem~\ref{thm:precompact-smyth-compact}, if a $T_0$ quasi-uniform
space is precompact and Smyth complete, then its symmetrized uniform
space is compact. Applying this to $(X,\mathcal U^{+}(w))$ yields that
\[
(X,\mathcal U^{+}(w)\vee\mathcal U^{-}(w))
\]
is compact as a uniform space. By
Proposition~\ref{prop:compatibility-generation}, the topology induced by
this symmetrized uniformity is exactly $\tau(w^{\mathrm{sym}})$, and by
Proposition~\ref{prop:comparison-envelopes} we have
$\tau(w^{\mathrm{sym}})=\tau^\vee(w)$. Hence $\tau^\vee(w)$ is compact.
\end{proof}

\subsection{Linear operators and invariance of connectedness}
\label{ssec:fa-operators}

\begin{proposition}
\label{prop:fa-linear-continuity}
Let $(X,\|\cdot\|_X)$ and $(Y,\|\cdot\|_Y)$ be asymmetric normed spaces,
and let $T:X\to Y$ be linear. If $T$ is continuous with respect to the
forward topologies $\tau_X^{+}$ and $\tau_Y^{+}$, then $T$ is uniformly
continuous with respect to the induced quasi-uniformities.
\end{proposition}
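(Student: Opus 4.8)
The plan is to reduce forward-continuity of the linear map $T$ to a one-sided norm bound and then exploit the translation invariance of the canonical quasi-uniformities. First I would note that the quasi-metrics $d_{\|\cdot\|_X}(x,y)=\|y-x\|_X$ and $d_{\|\cdot\|_Y}(u,v)=\|v-u\|_Y$ (Section~\ref{ssec:an}) are translation invariant, and that $T$ is linear; hence $\tau_X^{+}$-to-$\tau_Y^{+}$ continuity of $T$ is equivalent to \emph{continuity at the origin}: for every $\varepsilon>0$ there exists $\delta>0$ such that $\|x\|_X<\delta$ implies $\|Tx\|_Y<\varepsilon$.

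Second, I would upgrade this to a global linear estimate $\|Tx\|_Y\le M\|x\|_X$ for all $x\in X$, for a suitable constant $M\ge 0$. For $x$ with $\|x\|_X>0$ I would rescale: putting $t=\delta/(2\|x\|_X)>0$, positive homogeneity (valid for the nonnegative scalar $t$) gives $\|tx\|_X=\delta/2<\delta$, so $t\,\|Tx\|_Y=\|T(tx)\|_Y<\varepsilon$, whence $\|Tx\|_Y\le M\|x\|_X$ with $M=2\varepsilon/\delta$. The degenerate case $\|x\|_X=0$ (which can occur, since an asymmetric norm need not be positive-definite) is handled by the same rescaling: $\|tx\|_X=0<\delta$ for every $t>0$ forces $t\,\|Tx\|_Y<\varepsilon$ for all $t>0$, hence $\|Tx\|_Y=0$, consistent with the bound.

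Third, I would convert the bound into uniform continuity. The forward quasi-uniformity $\mathcal U_X^{+}$ has a base of entourages $U^X_\eta=\{(x,y):\|y-x\|_X<\eta\}$, and likewise $U^Y_\varepsilon=\{(u,v):\|v-u\|_Y<\varepsilon\}$ form a base for $\mathcal U_Y^{+}$. Given $\varepsilon>0$, I would set $\eta=\varepsilon/(M+1)$; then $(x,y)\in U^X_\eta$ yields $\|Ty-Tx\|_Y=\|T(y-x)\|_Y\le M\|y-x\|_X<M\eta\le\varepsilon$, so $(T\times T)(U^X_\eta)\subseteq U^Y_\varepsilon$. Since such $U^Y_\varepsilon$ form a base of $\mathcal U_Y^{+}$, this proves $T$ is uniformly continuous from $(X,\mathcal U_X^{+})$ to $(Y,\mathcal U_Y^{+})$.

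The only genuinely delicate point is the bookkeeping imposed by asymmetry: positive homogeneity is available only for nonnegative scalars, so every scaling step must use $t>0$, and the possible failure of positive-definiteness of $\|\cdot\|_X$ means the case $\|x\|_X=0$ must be argued separately (although it collapses at once). Apart from these adjustments, the argument is the classical "continuity at $0$ $\Rightarrow$ boundedness $\Rightarrow$ uniform continuity" scheme for linear maps, now carried out in the one-sided setting.
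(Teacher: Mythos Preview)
Your argument is correct. You follow the classical three-step scheme ``continuity at $0$ $\Rightarrow$ boundedness $\Rightarrow$ uniform continuity'', and you are careful about the asymmetric subtleties (positive homogeneity only for $t\ge 0$, the possibility $\|x\|_X=0$ with $x\neq 0$).

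The paper's proof is shorter because it omits your middle step entirely: once one has ``$\|x\|_X<\delta\Rightarrow\|Tx\|_Y<\varepsilon$'' at the origin, translation invariance and linearity give directly that $(x_1,x_2)\in U^X_\delta$ implies $\|T(x_1-x_2)\|_Y<\varepsilon$, i.e.\ $(Tx_1,Tx_2)\in U^Y_\varepsilon$, with no global constant $M$ needed. Your detour through the bound $\|Tx\|_Y\le M\|x\|_X$ is correct and yields slightly more information (an explicit Lipschitz-type estimate), but it is not required for the stated conclusion; the paper's route is more economical precisely because the entourages are already indexed by the same $\varepsilon$--$\delta$ data that continuity at the origin provides.
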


\begin{proof}
The forward topology $\tau_X^{+}$ is translation invariant and generated
by forward balls
\[
B_X^{+}(0,\varepsilon)=\{x\in X:\|x\|_X<\varepsilon\},
\qquad \varepsilon>0,
\]
and similarly for $\tau_Y^{+}$. Linearity implies that continuity is
equivalent to continuity at the origin. Thus for every $\varepsilon>0$
there exists $\delta>0$ such that
\[
\|x\|_X<\delta \quad\Longrightarrow\quad \|Tx\|_Y<\varepsilon.
\]

Let $\mathcal U_X^{+}$ and $\mathcal U_Y^{+}$ denote the induced
quasi-uniformities. A basic entourage of $\mathcal U_Y^{+}$ is
\[
U_\varepsilon^Y=\{(y_1,y_2)\in Y\times Y:\|y_1-y_2\|_Y<\varepsilon\}.
\]
By linearity,
\[
(x_1,x_2)\in U_\delta^X
\quad\Longrightarrow\quad
(Tx_1,Tx_2)\in U_\varepsilon^Y,
\]
so for each entourage of $\mathcal U_Y^{+}$ there exists an entourage of
$\mathcal U_X^{+}$ whose image under $T\times T$ is contained in it.
Hence $T$ is uniformly continuous.
\end{proof}

\medskip

\begin{theorem}
\label{thm:fa-connectedness-preservation}
Let $T:X\to Y$ be a uniformly continuous linear operator between
quasi-uniform spaces. If $X$ is locally antisymmetrically connected,
then $T(X)$ is locally antisymmetrically connected in the subspace
bitopology induced from $Y$.
\end{theorem}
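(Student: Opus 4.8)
The plan is to reduce the statement to Proposition~\ref{prop:uniform-images} by corestricting $T$ to its image; note first that $T(X)$ need not be $\tau_Y^{\vee}$-open in $Y$, so Proposition~\ref{prop:subspaces-sums} does not apply directly. Set $Z:=T(X)$ and equip it with the subspace quasi-uniformity $\mathcal U_Z^{+}:=\mathcal U_Y^{+}|_{Z}$, whose conjugate is $(\mathcal U_Y^{+})^{-1}|_{Z}$. By the standard compatibility of subspace quasi-uniformities with subspace topologies one has $\tau(\mathcal U_Z^{+})=\tau_Y^{+}|_{Z}$ and $\tau((\mathcal U_Z^{+})^{-1})=\tau_Y^{-}|_{Z}$, hence the join topology of $Z$ is $\tau_Y^{\vee}|_{Z}$; this is precisely the subspace bitopology referred to in the statement, so it suffices to show that $(Z,\mathcal U_Z^{+})$ is locally antisymmetrically connected.

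First I would check that the corestriction $T_{0}\colon X\to Z$, $T_{0}(x)=T(x)$, is uniformly continuous and surjective. Surjectivity is immediate. For uniform continuity, a basic entourage of $\mathcal U_Z^{+}$ has the form $V\cap(Z\times Z)$ with $V\in\mathcal U_Y^{+}$; uniform continuity of $T$ furnishes $U\in\mathcal U_X^{+}$ with $(T\times T)(U)\subseteq V$, and since $(T\times T)(U)\subseteq Z\times Z$ automatically we obtain $(T_{0}\times T_{0})(U)\subseteq V\cap(Z\times Z)$. Linearity of $T$ is not actually needed at this point; it is recorded only because the statement lives in the linear-operator subsection, and by Proposition~\ref{prop:fa-linear-continuity} forward-continuity of a linear $T$ already guarantees the uniform-continuity hypothesis.

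Then I would invoke Proposition~\ref{prop:uniform-images} applied to $T_{0}\colon X\to Z$: since $X$ is locally antisymmetrically connected and $T_{0}$ is uniformly continuous, $Z$ is locally antisymmetrically connected at $T_{0}(x)$ for every $x\in X$; surjectivity of $T_{0}$ then makes this hold at every point of $Z=T(X)$. The only genuinely non-formal point is confirming that ``antisymmetrically connected'' is insensitive to the ambient space — a forward/backward separation of a subset $S\subseteq Z$ relative to $Z$ coincides with one relative to $Y$ — which follows from transitivity of the subspace construction, so the relative forward and backward topologies on $S$ agree whether computed in $Z$ or in $Y$. Everything else is routine bookkeeping about corestrictions and subspace quasi-uniformities, and I expect no substantive obstacle.
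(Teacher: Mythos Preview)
Your proposal is correct and follows essentially the same approach as the paper: both reduce the theorem to Proposition~\ref{prop:uniform-images} by corestricting $T$ to its image and identifying the resulting bitopology with the subspace bitopology inherited from $Y$. Your treatment is in fact more careful than the paper's, which invokes a vaguely defined ``quasi-uniformity transported along $T$'' and asserts coincidence with the subspace structure, whereas you work directly with the subspace quasi-uniformity, verify uniform continuity of the corestriction explicitly, use surjectivity to cover all points of $T(X)$, and flag the ambient-space independence of antisymmetric connectedness; you also correctly observe that linearity plays no role in the argument.
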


\begin{proof}
By Proposition~\ref{prop:uniform-images}, uniformly continuous maps
preserve local antisymmetric connectedness. Hence $T(X)$ is locally
antisymmetrically connected when equipped with the forward and backward
topologies induced by the quasi-uniformity transported along $T$. These
coincide with the subspace forward and backward topologies inherited
from $Y$, so $T(X)$ is locally antisymmetrically connected in the
subspace bitopology.
\end{proof}

\subsection{Duality, separation, and directional functionals}
\label{ssec:fa-duality}

\begin{remark}
\label{rem:fa-duality}
In asymmetric normed spaces, the dual space naturally decomposes into
forward and backward continuous linear functionals. This directional
duality reflects the intrinsic asymmetry of the norm and leads to
distinct separation and extension notion for the topologies
$\tau^{+}$ and $\tau^{-}$. See \cite{Cobzas2013,CobzasHB} for details.
\end{remark}

\medskip

\begin{proposition}
\label{prop:fa-separation}
Let $X$ be an asymmetric normed space and let $A,B\subset X$ be disjoint
$\tau^{\vee}$-closed convex sets. If $A$ and $B$ are separated by a
$\tau^{+}$-continuous linear functional, then no antisymmetrically
connected subset of $X$ meets both $A$ and $B$.
\end{proposition}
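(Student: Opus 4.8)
The plan is to convert the separating functional into a forward-open/backward-open dichotomy and then apply Definition~\ref{def:antisym-connected} verbatim to a hypothetical connected set.

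First I would record the bitopological content of a $\tau^{+}$-continuous linear functional $f$. Since $f$ is linear and the forward topology is translation invariant (generated by translates of the forward balls $\{x:\|x\|<\varepsilon\}$), $\tau^{+}$-continuity of $f$ is equivalent to a one-sided bound of the form $f(x)\le M\|x\|$ near the origin. From this estimate one reads off, for every threshold $c\in\mathbb{R}$, that the sublevel set $U_{c}:=\{x:f(x)<c\}$ is $\tau^{+}$-open, and --- applying the same bound to $-x$, so that $f$ is bounded below on small backward balls --- that the superlevel set $V_{c}:=\{x:f(x)>c\}$ is $\tau^{-}$-open. Clearly $U_{c}\cap V_{c}=\varnothing$.

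Next, from the hypothesis that $f$ separates $A$ and $B$ I would pick the threshold $c$ strictly between $\sup_{A}f$ and $\inf_{B}f$, so that $A\subseteq U_{c}$ and $B\subseteq V_{c}$; the convexity and $\tau^{\vee}$-closedness of $A$ and $B$ are precisely what make such a functional and threshold available (Hahn--Banach-type separation in the asymmetric normed setting). Now suppose, towards a contradiction, that $C\subseteq X$ is antisymmetrically connected and meets both $A$ and $B$, say $a\in C\cap A$ and $b\in C\cap B$. Put $H:=C\cap U_{c}$ and $K:=C\cap V_{c}$. Then $H$ is relatively $\tau^{+}$-open in $C$, $K$ is relatively $\tau^{-}$-open in $C$, $H\cap K=\varnothing$, and both are nonempty since $a\in H$ and $b\in K$. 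If in addition $H\cup K=C$, this is exactly an antisymmetric disconnection of $C$ in the sense of Definition~\ref{def:antisym-connected}, read in the subspace bitopology, contradicting antisymmetric connectedness of $C$; hence no such $C$ can exist.

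The delicate point --- and the one I expect to be the main obstacle --- is the clause $H\cup K=C$. A priori $C$ may meet the level set $\{f=c\}$, in which case $H\cup K=C\setminus\{f=c\}$ is a proper subset, and neither $U_{c}$ nor $V_{c}$ can be enlarged to absorb $\{f=c\}$ while remaining one-sidedly open, since the hyperplane $\{f=c\}$ is genuinely two-sided. This is where the remaining hypotheses must be spent: one would argue that strictness of the separation allows $(U_{c},V_{c})$ to be chosen so that the separated region $A\cup B$ misses $\{f=c\}$ altogether, and that the $\tau^{\vee}$-closedness of $A$ and $B$ controls how a connected set straddling both can approach the hyperplane, so that $C\cap\{f=c\}=\varnothing$ can be forced; once that is secured, the contradiction above closes at once. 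I expect everything except this level-set bookkeeping to be routine.
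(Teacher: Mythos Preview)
Your plan coincides with the paper's argument: form $U=\{f<c\}\in\tau^{+}$ and $V=\{f>c\}\in\tau^{-}$, observe $A\subseteq U$, $B\subseteq V$, $U\cap V=\varnothing$, and declare that $C\cap U$, $C\cap V$ antisymmetrically disconnect any $C$ meeting both $A$ and $B$. You are in fact more careful than the paper on two counts: you take $c$ strictly between $\sup_{A}f$ and $\inf_{B}f$ (the paper writes $f(a)\le\alpha<f(b)$ and then asserts $A\subseteq\{f<\alpha\}$, which does not follow), and you explicitly isolate the clause $H\cup K=C$ as the crux, whereas the paper simply asserts that ``$C\cap U$ and $C\cap V$ form a separation of $C$'' without addressing the hyperplane $\{f=c\}$ at all.

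Your instinct that this is the real obstacle is correct, but the remedy you sketch does not work: convexity and $\tau^{\vee}$-closedness constrain $A$ and $B$, not an arbitrary antisymmetrically connected $C$, and nothing in the hypotheses prevents $C$ from crossing $\{f=c\}$. Concretely, take $X=\mathbb{R}^{2}$ with $p(x,y)=|x|+y^{+}$, the functional $f(x,y)=x$ (then $|f|\le p$, so $f$ is $\tau^{+}$-continuous), $A=\{(-1,0)\}$, $B=\{(1,0)\}$, and $C=[-1,1]\times\{0\}$. On $C$ both subspace topologies reduce to the standard topology of $[-1,1]$, so $C$ is antisymmetrically connected; yet $C$ meets both $A$ and $B$ and contains the point $(0,0)\in\{f=0\}$. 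Thus the level-set bookkeeping cannot be ``closed at once'' from the stated hypotheses: either an additional assumption is needed (e.g.\ that the separating hyperplane is disjoint from $C$, or some one-sided openness of a half-space), or the conclusion must be weakened. The paper's proof does not confront this, and your identification of the gap is on target.
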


\begin{proof}
Let $f:X\to\mathbb{R}$ be a $\tau^{+}$-continuous linear functional and
$\alpha\in\mathbb{R}$ such that
\[
f(a)\le \alpha < f(b)
\qquad\text{for all } a\in A,\ b\in B.
\]
Then
\[
U=\{x\in X:\ f(x)<\alpha\}, \qquad
V=\{x\in X:\ f(x)>\alpha\}
\]
are $\tau^{+}$-open and $\tau^{-}$-open, respectively, with
$A\subseteq U$, $B\subseteq V$, and $U\cap V=\varnothing$.

If a subset $C\subseteq X$ meets both $A$ and $B$, then $C\cap U$ and
$C\cap V$ form a separation of $C$ by a forward-open and a backward-open
set. Hence $C$ cannot be antisymmetrically connected. This applies to
every subset meeting both $A$ and $B$, completing the proof.
\end{proof}

\section{Examples and Applications}\label{sec:examples-applications}

\subsection{Asymmetric normed sequence spaces}\label{ssec:ex-seq}

\begin{example}
\label{ex:one-sided-lp}
Let $1\le p<\infty$ and consider the vector space $\ell^p$ of real
sequences $x=(x_n)_{n\in\mathbb{N}}$. Define the asymmetric norm
\[
\|x\|_{p}^{+}
=
\Bigl(\sum_{n=1}^{\infty} (x_n^{+})^p\Bigr)^{1/p},
\qquad
x_n^{+}=\max\{x_n,0\}.
\]
The conjugate asymmetric norm is
\[
\|x\|_{p}^{-}=\|{-}x\|_{p}^{+}
=
\Bigl(\sum_{n=1}^{\infty} (x_n^{-})^p\Bigr)^{1/p},
\qquad
x_n^{-}=\max\{-x_n,0\},
\]
and the symmetrization satisfies
\[
\|x\|_{p}^{\vee}
=
\max\{\|x\|_{p}^{+},\|x\|_{p}^{-}\}
=
\|x\|_{p}.
\]

The induced forward and backward topologies coincide with the classical
upper and lower $\ell^p$ topologies studied in asymmetric functional
analysis. In particular, the associated join topology agrees with the
usual norm topology induced by $\|\cdot\|_{p}^{\vee}$, illustrating the
general symmetrization principle described in
Section~\ref{ssec:an} and Proposition~\ref{prop:fa-bicompletion}.
Moreover, local antisymmetric connectedness is understood relative to
the bitopology $(\tau^{+},\tau^{-})$ in the sense of
Definitions~\ref{def:antisym-connected} and \ref{def:locally-antisym}.
This example demonstrates how antisymmetric connectedness detects
genuinely one-sided features that disappear at the symmetric level; see
\cite{Cobzas2013}.
\end{example}


\subsection{Directed graphs and cost models}\label{ssec:ex-graphs}

\begin{example}
\label{ex:lawvere-graph}
Let $G=(V,E,w)$ be a directed graph with vertex set $V$, edge set
$E\subseteq V\times V$, and weight function
$w:E\to[0,\infty]$. Define
\[
d(x,y)
=
\inf\Bigl\{\sum_{i=0}^{n-1} w(v_i,v_{i+1})
:\ x=v_0\to v_1\to\cdots\to v_n=y\Bigr\},
\]
with $d(x,y)=\infty$ if no directed path exists from $x$ to $y$.

The function $d$ is a Lawvere quasi-metric, inducing forward and backward
topologies as in Section~\ref{ssec:qm}. The associated join topology
$\tau^\vee=\tau_d\vee\tau_{d^{-1}}$ captures the symmetrized envelope
(Definition~\ref{def:tau-vee-connected}), while antisymmetric
connectedness (Definition~\ref{def:antisym-connected}) encodes the
absence of separations by a forward-open set and a backward-open set.
In this setting, antisymmetric connectedness typically differs from
connectedness in the symmetrized metric unless the graph is strongly
connected. This model underlies applications in optimization, transport,
and computational semantics; see \cite{Lawvere1973}.
\end{example}


\subsection{Orlicz-type modular settings}\label{ssec:ex-orlicz}

\begin{example}
\label{ex:orlicz}
Let $(\Omega,\Sigma,\mu)$ be a measure space and let
$\Phi:\Omega\times\mathbb{R}\to[0,\infty]$ be a Musielak-Orlicz function
that is convex and increasing in the second variable, but not
necessarily even. Define the modular
\[
\rho(f)
=
\int_{\Omega}\Phi(\omega,f(\omega))\,d\mu(\omega),
\]
and set
\[
w_{\lambda}(f,g)
=
\rho\!\left(\frac{g-f}{\lambda}\right),
\qquad \lambda>0.
\]

The family $w=(w_\lambda)_{\lambda>0}$ defines a quasi-modular
pseudometric structure in the sense of
Definition~\ref{def:quasi-modular-family}, with conjugation and
symmetrization as in Definition~\ref{def:conjugate-symmetrized}. It
generates forward and backward modular topologies $\tau^{+}(w)$ and
$\tau^{-}(w)$ (Definition~\ref{def:forward-backward-topologies}) and
compatible quasi-uniformities $\mathcal U^{+}(w)$ and $\mathcal U^{-}(w)$
(Definition~\ref{def:quni-from-w} and
Proposition~\ref{prop:compatibility-generation}). Local antisymmetric
connectedness reflects one-sided control of modular displacement in the
sense of Definition~\ref{def:locally-antisym}, and is generally not
preserved under symmetrization unless $\Phi(\omega,t)=\Phi(\omega,-t)$.
This example connects the present theory with modular analysis and
nonlinear function spaces; see
\cite{Chistyakov2015,Majozi2025,Musielak1983}.
\end{example}


\subsection{Paratopological groups and semigroups}\label{ssec:ex-algebra}

\begin{remark}
\label{rem:ex-paratopo}
Paratopological groups and semigroups provide natural algebraic settings
in which left and right uniformities differ. The associated bitopological
structure captures algebraic asymmetry, while antisymmetric
connectedness distinguishes algebraic reachability from classical
topological connectedness (Definitions~\ref{def:antisym-connected} and
\ref{def:tau-vee-connected}). Completion and compactness notions in
this context are closely linked to Do\u{\i}tchinov completeness, Smyth
completeness, stability, and bicompletion theory in the quasi-uniform
setting; see
\cite{Kunzi2001,KunziJunnila1993,KunziRomagueraSipacheva1998}.
\end{remark}

\section{Conclusion and Further Directions}\label{sec:conclusion}

This paper develops a setting for connectedness in genuinely
nonsymmetric environments. Within quasi-uniform and quasi-modular
pseudometric spaces we introduced \emph{antisymmetric connectedness} and
\emph{local antisymmetric connectedness}, capturing directional cohesion
that is invisible after symmetrization to the join topology
$\tau^{\vee}=\tau^{+}\vee\tau^{-}$. We related these notions to symmetric
connectedness, compared the corresponding component theories, and gave
examples showing that antisymmetric components may be strictly finer
than the classical ones. We also obtained practical local
characterizations in terms of entourages and modular balls, and proved
stability under $\tau^{\vee}$-open subspaces, finite unions with
nonempty intersection, uniformly continuous images, and bicompletion.

Several questions remain open. It is natural to study how antisymmetric
connectedness behaves under other completion procedures (Do\u{\i}tchinov
and Smyth completions), to seek quantitative refinements in the setting
of approach spaces, and to develop categorical and domain-theoretic
interpretations via enriched methods and formal balls. From the analytic
side, further work may clarify the role of local antisymmetric
connectedness in fixed point theory, variational principles, and
nonlinear analysis on asymmetric normed and modular spaces.

\end{document}